\theoremstyle{plain}
\newtheorem{thm}{Theorem}[section]
\newtheorem{prp}[thm]{Proposition}
\newtheorem{lem}[thm]{Lemma}
\newtheorem{cor}[thm]{Corollary}
\theoremstyle{definition}
\newtheorem{dfn}[thm]{Definition}
\theoremstyle{remark}
\newtheorem{rmk}[thm]{Remark}
\newtheorem{exm}[thm]{Example}
\newcommand{\Hom}{\mathrm{Hom}}
\newcommand{\Inn}{\mathrm{Inn}}
\newcommand{\id}{\mathrm{id}}
\newcommand{\Ker}{\mathrm{Ker}}
\newcommand{\res}{\mathrm{res}}
\newcommand{\Ab}{\mathsf{Ab}}
\newcommand{\SSet}{\mathsf{SSet}} 	
\newcommand{\GL}{\mathrm{GL}}
\newcommand{\ls}[2]{~{}^{#1}\!{#2}}
\newcommand{\CF}{\mathcal{F}}
\newcommand{\CG}{\mathcal{G}}
\newcommand{\CM}{\mathcal{M}}
\newcommand{\BF}{\mathbb{F}}
\newcommand{\BN}{\mathbb{N}}
\newcommand{\BZ}{\mathbb{Z}}
\begin{document}
\title[]{On the cohomology of pro-fusion systems}
\author[A. D\'iaz Ramos]{Antonio D\'iaz Ramos}
\email{adiazramos@uma.es}
\author[O. Garaialde Oca\~na]{Oihana Garaialde Oca\~na}
\email{oihana.garayalde@ehu.eus}
\author[N. Mazza]{Nadia Mazza}
\email{n.mazza@lancaster.ac.uk}
\author[S. Park]{Sejong Park}
\email{sejongpark@gmail.com}

\subjclass[2010]{Primary: 20E18; Secondary: 22E41, 18G40, 55R35}
\date{\today}
\keywords{profinite groups, pro-fusion systems, cohomology, $p$-adic
  analytic groups}

\begin{abstract}
We prove the Cartan-Eilenberg stable elements theorem and construct a
Lyndon-Hochschild-Serre type spectral sequence for pro-fusion
systems. As an application, we determine
the continuous mod-$p$ cohomology ring of $\GL_2(\BZ_p)$ for any odd
prime $p$. 
\end{abstract}

\maketitle

\section{Introduction}
\label{section:introduction}

Throughout, let $p$ denote a prime number. Fusion systems for finite groups and
compact Lie groups have been successfully defined as algebraic models
for their $p$-completed classifying spaces, see \cite{BLOsurvey}. For
profinite groups, fusion was first studied in \cite{GRS1999}. More
recently, fusions systems have been defined over pro-$p$
groups and are termed \emph{pro-fusion systems}
\cite{StancuSymonds2014}.  

To compute the mod-$p$ cohomology rings of finite groups and compact
Lie groups two tools stand out, namely, the well-known
Cartan-Eilenberg stable elements theorem, and the
Lyndon-Hochschild-Serre spectral sequence.
In the present work, we
study the corresponding tools for the continuous mod-$p$ cohomology ring
$H^*_c(\cdot;\BF_p)$ of pro-fusion systems, where the coefficients are
the trivial module $\BF_p$. If there is no confusion, we
write $H^*_c(\cdot)=H^*_c(\cdot;\BF_p)$ for brevity. Our first main
result deals with pro-saturated pro-fusion systems (see Definition \ref{def:prosaturated_profusion_system}).

\begin{thm}[Stable Elements Theorem for Pro-Fusion Systems]
\label{T:introduction_Cartan_Eilenberg}
Let $\CF$ be a pro-saturated pro-fusion system on a pro-$p$ group $S$, where
$\CF= \varprojlim_{i\in I} \CF_i$ and
$S=\varprojlim_{i\in I}S_i$. Then there is a ring isomorphism
\[
H^*_c(S)^\CF \cong \varinjlim_{i\in I} H^*(S_i)^{\CF_i}.
\]
\end{thm}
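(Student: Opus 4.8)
The plan is to reduce the statement to the finite levels and then invoke exactness of filtered colimits. The starting point is the standard identification $H^*_c(S)\cong\varinjlim_{i\in I}H^*(S_i)$ of the continuous cohomology of $S=\varprojlim_i S_i$ with the filtered colimit of the cohomologies of its finite quotients, the transition maps being inflation along the projections $S_j\onto S_i$ for $j\ge i$; the same fact applies to every closed subgroup $P=\varprojlim_i P_i$, and the restriction maps $\res^S_P$ together with the maps induced by morphisms of $\CF$ are the colimits of their finite-level analogues. Under these identifications I want to show that the subring $H^*_c(S)^\CF$ of $\CF$-stable classes is precisely the image of the natural map $\varinjlim_i H^*(S_i)^{\CF_i}\to H^*_c(S)$.

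The technical engine is the compatibility of stability defects with inflation. For $j\ge i$ and a morphism $\psi\colon Q\to S_j$ of $\CF_j$ covering a morphism $\bar\psi\colon\bar Q\to S_i$ of $\CF_i$, where $\bar Q\le S_i$ is the image of $Q$, a short diagram chase (restriction and the maps $\psi^*$ commute with inflation) shows that the defect $\res^{S_j}_Q(\Inf_{ij}x)-\psi^*\res^{S_j}_{\psi(Q)}(\Inf_{ij}x)$ is the inflation along $Q\onto\bar Q$ of the defect of $x$ at $\bar\psi$. Two consequences follow at once. First, stability propagates upward, so $\{H^*(S_i)^{\CF_i}\}_i$ is a directed subsystem of $\{H^*(S_i)\}_i$ and the induced map from its colimit is injective, because filtered colimits preserve the inclusions $H^*(S_i)^{\CF_i}\hookrightarrow H^*(S_i)$. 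Second, since every morphism of $\CF=\varprojlim_i\CF_i$ is a compatible family of morphisms $\psi_j$ with $\psi_k$ covering $\psi_j$, the defect of an inflated stable class at such a morphism vanishes at every level and hence in $H^*_c(P)$; thus the image of $\varinjlim_i H^*(S_i)^{\CF_i}$ lies in $H^*_c(S)^\CF$.

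The substance of the theorem, and the step I expect to be the main obstacle, is the reverse inclusion: every $\CF$-stable class is already stable at a finite level. Fix $\hat x\in H^*_c(S)^\CF$ and write $\hat x=\Inf_i(x)$ with $x\in H^*(S_i)$. Suppose for contradiction that $\Inf_{ij}(x)$ is not $\CF_j$-stable for any $j\ge i$, and let $B_j$ be the set of pairs $(Q,\psi)$ with $Q\le S_j$ and $\psi\in\Hom_{\CF_j}(Q,S_j)$ whose defect at $\Inf_{ij}(x)$ is nonzero. Each $B_j$ is finite, since $S_j$ is a finite $p$-group, and nonempty by hypothesis. The compatibility formula shows that the projection $\CF_k\to\CF_j$ sends $B_k$ into $B_j$: a defect that is nonzero after inflation was already nonzero. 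Hence $\{B_j\}_{j\ge i}$ is a cofiltered system of nonempty finite sets, so its inverse limit is nonempty, and a thread $(\psi_j)_j$ assembles, by the very construction of $\CF=\varprojlim_i\CF_i$, into a single morphism $\varphi=\varprojlim_j\psi_j\colon P\to S$ of $\CF$. The defect of $\hat x$ at $\varphi$ lies in $H^*_c(P)=\varinjlim_j H^*(\mathrm{dom}\,\psi_j)$ and is represented at each level by the nonzero defect of $\Inf_{ij}(x)$ at $\psi_j$; these representatives are compatible under inflation and none vanishes, so the class is nonzero in the colimit, contradicting the $\CF$-stability of $\hat x$. Therefore some $\Inf_{ij}(x)$ is $\CF_j$-stable and the map is onto $H^*_c(S)^\CF$. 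Since all the maps involved are homomorphisms of graded rings and the stable elements form subrings, the resulting bijection is a ring isomorphism.
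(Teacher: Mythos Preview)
Your argument is correct, and it takes a genuinely different route from the paper's. For the hard direction (surjectivity onto $H^*_c(S)^\CF$), the paper invokes the pro-saturation hypothesis through results of Stancu--Symonds to replace the $\CF_i$ by the quotient systems $\CF/N_i$, so that every morphism of $\CF_i$ lifts to a morphism of $\CF^\circ$; stability of $\hat x$ then forces each individual defect of $x_i$ to die at some higher level, and finiteness of $\CF_i$ gives a common level $k$ at which $x_k$ is $\CF_k$-stable. You bypass this lifting step entirely with a compactness argument: the sets $B_j$ of instability witnesses form an inverse system of nonempty finite sets, and a thread assembles into a single $\CF$-morphism at which the defect of $\hat x$ is visibly nonzero in the colimit. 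One small point deserves a sentence of care: to see that the thread $(\psi_j)$ really lands in $\Hom_\CF(P,S)=\varprojlim_j\Hom_{\CF_j}(f_j(P),f_j(S))$ you should note that the surjectivity of the transition maps $Q_k\to Q_j$ forces $f_j(P)=Q_j$ (and likewise for the images $\psi_j(Q_j)$). What your approach buys is that the pro-saturation hypothesis is never used, so you have in fact proved the isomorphism $H^*_c(S)^\CF\cong\varinjlim_i H^*(S_i)^{\CF_i}$ for an arbitrary pro-fusion system. What the paper's approach buys, and yours does not directly recover, is the auxiliary equality $H^*_c(S)^\CF = H^*_c(S)^{\CF^\circ}$ stated in their Theorem~4.4: your compactness argument produces a bad $\CF$-morphism with closed, not necessarily open, domain, so it does not immediately show that $\CF^\circ$-stability suffices.
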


Here, $H^*_c(S)^\CF$ and $H^*(S_i)^{\CF_i}$ are the subrings of stable
elements for $S$ and its finite quotients $S_i$, respectively, see Definition \ref{D:GeneralStablElements}. If $\CF$ is finitely generated in the sense of Definition \ref{def:fin-gen}, the stable elements are determined via a finite number of conditions. As
usual, we define the cohomology ring of the pro-fusion system $\CF$
(resp. the finite fusion system $\CF_i$) to be the subring
$H_c^*(\CF):=H^*_c(S)^\CF$ (resp. $H^*(\CF_i):=H^*(S_i)^{\CF_i}$) of
$H^*_c(S)$ (resp. $H^*(S_i)$). 

For a profinite group $G$, we consider the profinite $p$-completion of
the classifying space of $G$, $BG_p$, following Morel \cite{Morel1996}, and we show - analogously to the finite case - that there is a ring isomorphism
\[
H^*_c(BG_p)\cong H^*_c(G).
\]
In addition, 
we write $\CF_S(G)$ for the pro-fusion system defined by the
conjugation action of $G$ on a Sylow $p$-subgroup $S$. 
It turns out that $\CF_S(G)$ is pro-saturated and saturated and that it is finitely generated if $S$ is open in
$G$. This is the case if $G$ is a compact $p$-adic analytic group;
for example if $G=\GL_n(\BZ_p)$. 

\begin{thm}[Stable Elements Theorem for Profinite Groups]\label{T:introduction_cohomology_profinitegroup}
Let $G$ be a profinite group. Then, there is a ring isomorphism
\[
H^*_c(G)\cong H^*_c(S)^{\CF_S(G)}.
\]
\end{thm}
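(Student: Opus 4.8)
The plan is to prove this by combining the two-step approach suggested by the paper's structure: first express the continuous cohomology of $G$ itself as a limit over the finite quotients $G/U$ (as $U$ ranges over the open normal subgroups defining $G=\varprojlim G/U$), then apply the classical Cartan--Eilenberg stable elements theorem at each finite level, and finally match the resulting limit with the right-hand side via Theorem \ref{T:introduction_Cartan_Eilenberg}. Concretely, continuous cohomology of a profinite group is defined as $H^*_c(G)\cong\varinjlim_U H^*(G/U)$, where the colimit runs over open normal subgroups $U$ contained in $S$ (or compatible with a cofinal system realizing both $G$ and $S$), and the connecting maps are inflations.

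First I would set up a compatible cofinal system. Since $S$ is a Sylow $p$-subgroup of $G$, I want to choose the indexing system $\{U_i\}_{i\in I}$ of open normal subgroups of $G$ so that the finite quotients are $G_i := G/U_i$, the images $S_i := SU_i/U_i$ are Sylow $p$-subgroups of $G_i$, and the pro-fusion system decomposes as $\CF_S(G)=\varprojlim_i \CF_{S_i}(G_i)$ with $S=\varprojlim_i S_i$. This is exactly the description of $\CF_S(G)$ as a pro-fusion system, and the paper asserts it is pro-saturated, so Theorem \ref{T:introduction_Cartan_Eilenberg} applies to give
\[
H^*_c(S)^{\CF_S(G)} \cong \varinjlim_{i\in I} H^*(S_i)^{\CF_{S_i}(G_i)}.
\]

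Next I would invoke the classical (finite) Cartan--Eilenberg theorem at each level: for the finite group $G_i$ with Sylow $p$-subgroup $S_i$, one has $H^*(G_i)\cong H^*(S_i)^{\CF_{S_i}(G_i)}$, the subring of stable elements. These isomorphisms are induced by restriction $\res^{G_i}_{S_i}$, and the main technical point is checking that they are compatible with the inflation maps in the two directed systems — that is, the square relating $\res^{G_i}_{S_i}$, $\res^{G_j}_{S_j}$, and the inflations $H^*(G_j)\to H^*(G_i)$ and $H^*(S_j)\to H^*(S_i)$ must commute. This follows from naturality of restriction with respect to the quotient maps $G_i\onto G_j$ and $S_i\onto S_j$, since restriction and inflation are both functorial in the group. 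Passing to the colimit then yields $\varinjlim_i H^*(G_i)\cong\varinjlim_i H^*(S_i)^{\CF_{S_i}(G_i)}$, and identifying the left side with $H^*_c(G)$ completes the chain of isomorphisms. Since each level isomorphism is a ring map and colimits of rings are rings, the resulting isomorphism respects the ring structure.

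The main obstacle I anticipate is not the formal assembly but the verification that the directed system of finite fusion systems $\CF_{S_i}(G_i)$ genuinely realizes $\CF_S(G)$ as an inverse limit in the precise technical sense required by Theorem \ref{T:introduction_Cartan_Eilenberg} — in particular, that the $S_i$ are honestly Sylow in $G_i$ and that the structure maps on fusion data (morphisms between subgroups) are compatible across the system, so that the stable-elements subrings glue correctly. One must be slightly careful that a cofinal subsystem of open normal subgroups $U$ exists for which all these compatibilities hold simultaneously; this is a standard profinite-group argument using that $S$ is closed and the $U$ form a neighborhood basis of the identity, but it is the step where the definitions of pro-fusion system, continuous cohomology, and Sylow $p$-subgroup of a profinite group must all be reconciled. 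Once that compatibility is in hand, the theorem reduces cleanly to the finite Cartan--Eilenberg theorem applied levelwise together with the already-established Theorem \ref{T:introduction_Cartan_Eilenberg}.
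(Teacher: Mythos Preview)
Your proposal is correct and follows essentially the same approach as the paper: write $G=\varprojlim G_i$ and $S=\varprojlim S_i$, apply the classical Cartan--Eilenberg theorem at each finite level to get $H^*_c(G)\cong\varinjlim H^*(G_i)\cong\varinjlim H^*(S_i)^{\CF_{S_i}(G_i)}$, and then invoke Theorem~\ref{T:introduction_Cartan_Eilenberg} (using that $\CF_S(G)$ is pro-saturated) to identify this colimit with $H^*_c(S)^{\CF_S(G)}$. The compatibility issues you flag are handled in the paper implicitly via the pro-saturation of $\CF_S(G)$ established earlier, so your caution there is well-placed but not an obstruction.
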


For both finite and compact Lie groups, a spectral sequence can be
built under weaker hypotheses than for the Lyndon-Hochschild-Serre
spectral sequence, \cite{Diaz2014,Gonzalez2016}. We prove
a version of this result for the continuous mod-$p$ cohomology of pro-fusion
systems.  

\begin{thm}\label{T:introduction_SpectralSequence}
Let $\CF$ be a pro-saturated pro-fusion system on a pro-$p$ group $S$
and let $T\leq S$ be a strongly $\CF$-closed subgroup. Then there is a first quadrant
cohomological spectral sequence with second page
\[
E_2^{n,m}= H_c^n(S/T;H_c^m(T))^\CF,
\]
and which converges to $H^*_c(S)^\CF$.
\end{thm}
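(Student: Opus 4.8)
The plan is to realize the desired spectral sequence as a filtered colimit of the finite fusion-theoretic Lyndon--Hochschild--Serre spectral sequences of \cite{Diaz2014,Gonzalez2016}, and then to identify its two end terms by means of the Stable Elements Theorem. Write $\CF=\varprojlim_{i\in I}\CF_i$ and $S=\varprojlim_{i\in I}S_i$, where each $S_i$ is a finite $p$-group, each $\CF_i$ is a saturated fusion system on $S_i$ (as $\CF$ is pro-saturated, Definition \ref{def:prosaturated_profusion_system}), and the structure maps are the projections $S_j\onto S_i$ for $j\geq i$. Let $T_i$ denote the image of $T$ under $S\onto S_i$, so that $T=\varprojlim_i T_i$ and $S/T=\varprojlim_i S_i/T_i$. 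The first step is to descend the hypothesis on $T$: I would show that $T_i$ is strongly $\CF_i$-closed for all $i$ in a cofinal subset of $I$. This is a diagram chase using $T=\varprojlim_i T_i$ and the compatibility of the morphism sets of $\CF$ with those of the $\CF_i$ under the projections; since a cofinal subsystem computes the same limits, after passing to it we may assume the property holds for every $i$.

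With this in hand, for each $i$ the spectral sequence of \cite{Diaz2014,Gonzalez2016} applies to the pair $(\CF_i,T_i)$ and gives a first-quadrant cohomological spectral sequence
\[
{}_iE_2^{n,m}=H^n(S_i/T_i;H^m(T_i))^{\CF_i}\ \Longrightarrow\ H^{n+m}(S_i)^{\CF_i}.
\]
The projections are compatible with the fusion data and with $T_j\onto T_i$, so the induced maps on cohomology make $\{{}_iE\}_{i\in I}$ a direct system of spectral sequences over the directed set $I$. As filtered colimits of abelian groups are exact, they commute with the formation of homology; hence the termwise colimit ${}_\infty E_r^{n,m}:=\varinjlim_i{}_iE_r^{n,m}$, with differentials $\varinjlim_i d_r$, is again a spectral sequence satisfying ${}_\infty E_{r+1}=H({}_\infty E_r)$. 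Each ${}_iE$ lies in the first quadrant, hence so does ${}_\infty E$; and convergence is inherited, since for each fixed total degree the finite filtration on $H^{n+m}(S_i)^{\CF_i}$ has boundedly many steps and filtered colimits preserve such filtrations, so the colimit filtration on $\varinjlim_i H^{n+m}(S_i)^{\CF_i}$ has associated graded ${}_\infty E_\infty$.

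It remains to identify the two ends. The abutment is immediate from Theorem \ref{T:introduction_Cartan_Eilenberg}, which yields $\varinjlim_i H^{n+m}(S_i)^{\CF_i}\cong H_c^{n+m}(S)^\CF$. For the second page I would establish
\[
\varinjlim_i H^n(S_i/T_i;H^m(T_i))^{\CF_i}\ \cong\ H_c^n(S/T;H_c^m(T))^\CF,
\]
where the coefficients form the discrete $S/T$-module $H_c^m(T)=\varinjlim_i H^m(T_i)$, on which $S/T$ acts continuously because the action on each finite piece factors through some $S_i/T_i$. The inner computation is the standard fact that continuous cohomology of a profinite group commutes with the relevant double limit, $H_c^n(\varprojlim_i Q_i;\varinjlim_i M_i)\cong\varinjlim_i H^n(Q_i;M_i)$ for a compatible system of finite $Q_i$-modules $M_i$; applied with $Q_i=S_i/T_i$ and $M_i=H^m(T_i)$ this gives $H_c^n(S/T;H_c^m(T))\cong\varinjlim_i H^n(S_i/T_i;H^m(T_i))$.

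The main obstacle is then the last point: commuting the stable-element functor $(-)^\CF$ with the filtered colimit, i.e. showing that $\varinjlim_i\bigl(H^n(S_i/T_i;H^m(T_i))^{\CF_i}\bigr)$ equals $\bigl(\varinjlim_i H^n(S_i/T_i;H^m(T_i))\bigr)^\CF$. This amounts to a stable-elements theorem with non-trivial coefficients: one must check that the defining equalizer over the morphisms of $\CF$ (finitely many conditions when $\CF$ is finitely generated) is preserved by $\varinjlim_i$ and agrees with the condition defining $H_c^n(S/T;H_c^m(T))^\CF$. Since finite limits commute with filtered colimits and the $\CF_i$-conjugation maps on $H^*(T_i)$ are compatible with the projections, the colimit can be pushed inside the equalizer; this is precisely the mechanism already used to prove Theorem \ref{T:introduction_Cartan_Eilenberg}, now carried out with the system $\{H^m(T_i)\}$ in place of the trivial module $\BF_p$. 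Combining the three identifications yields the asserted spectral sequence.
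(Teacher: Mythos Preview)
Your proposal is correct and follows essentially the same route as the paper: one builds the spectral sequence as the filtered colimit of the finite fusion-theoretic LHS spectral sequences of \cite{Diaz2014}, uses exactness of filtered colimits to see that the colimit is again a spectral sequence, and identifies both the $E_2$-page and the abutment via the stable-elements mechanism of Theorem~\ref{T:introduction_Cartan_Eilenberg} carried out with the coefficient system $\{H^m(T_i)\}$ in place of $\BF_p$. The only cosmetic difference is that the paper phrases the construction as taking $\CF$-stable elements of the functor $P\mapsto E_k^{*,*}(P)$ (the LHS spectral sequence for $P\cap T\trianglelefteq P$) and showing this is a sub-spectral sequence of the LHS for $S$; it also handles the descent of the strongly-closed condition by passing to the model $\CF_i=\CF/N_i$ (available since $\CF$ is pro-saturated), where $T_i=TN_i/N_i$ is strongly $\CF_i$-closed automatically, rather than by a cofinality argument.
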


As first example application, we compute the cohomology ring
$H_c^*({\BZ_3}^{1+2}_+;\BF_3)$, where ${\BZ_3}^{1+2}_+$ is the
$3$-adic version of the finite extraspecial group $3^{1+2}_+$ (see Example \ref{ex:extraspecial}). Then we
determine the cohomology rings of the general linear groups of
dimension $2$ over the $p$-adic integers. 

\begin{thm} 
\label{T:introduction_cohoGL2p>=3}
We have:
\begin{enumerate}
\item[(a)] For $p=3$, $H_c^*(\GL_2(\BZ_3);\BF_3)\cong \BF_3[X]\otimes \Lambda(Z_1, Z_2,Z_3)$, with degrees $|Z_1|=1$, $|Z_2|=|Z_3|=3$ and $|X|=4$.
\item[(b)] For $p>3$, $H_c^*(\GL_2(\BZ_p);\BF_p)\cong \Lambda(Z_1,Z_2)$ with degrees $|Z_1|=1$, $|Z_2|=3$.
\end{enumerate}
\end{thm}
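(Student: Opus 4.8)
The plan is to reduce to the Sylow pro-$p$ subgroup via the Stable Elements Theorem for profinite groups (Theorem \ref{T:introduction_cohomology_profinitegroup}) and then compute the ring of stable elements directly. Write $G = \GL_2(\BZ_p)$. The reduction $G \onto \GL_2(\BF_p)$ has pro-$p$ kernel, and since the $p$-part of $|\GL_2(\BF_p)|$ is $p$, a Sylow pro-$p$ subgroup $S$ of $G$ is the preimage of the upper unitriangular subgroup $U \cong \BZ/p$, so $H^*_c(G) \cong H^*_c(S)^{\CF}$ with $\CF = \CF_S(G)$. First I would split off a central factor: the scalars $1 + p\BZ_p \cong \BZ_p$ lie in the centre of $G$ and in $S$, and using the determinant one checks $S \cong \BZ_p \times S'$, where $S' = S \cap \SL_2(\BZ_p)$ is the Sylow pro-$p$ subgroup of $\SL_2(\BZ_p)$. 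As the central factor is fixed by every morphism of $\CF$ (conjugation fixes the centre) and $\SL_2(\BZ_p)$ is normal, the Künneth theorem gives $H^*_c(S)^{\CF} \cong \Lambda(Z_1) \otimes H^*_c(S')^{\bar\CF}$ with $|Z_1| = 1$, producing the generator $Z_1$ detected by the determinant.

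It remains to compute the invariants $H^*_c(S')^{\bar\CF}$. The normalizer $N_G(S)$ is the Iwahori subgroup (the preimage of the Borel), so the diagonal torus $(\BF_p^\times)^2$ acts on $S'$, while the Weyl element $w = \left(\begin{smallmatrix} 0 & 1 \\ -1 & 0 \end{smallmatrix}\right)$ supplies the fusion not seen in $N_G(S)$. For $p > 3$ there are no elements of order $p$ in $G$, since a primitive $p$-th root of unity generates an extension of $\BQ_p$ of degree $p - 1 > 2$ and cannot be an eigenvalue of a $2 \times 2$ matrix over $\BZ_p$. Hence $G$ is $p$-torsion-free of dimension $4$, so $H^*_c(G)$ is finite-dimensional and, via the Lazard isomorphism on an open uniform subgroup, is computed by the Lie algebra cohomology $H^*(\mathfrak{gl}_2;\BF_p)$. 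Writing $\mathfrak{gl}_2 = \mathfrak{z} \oplus \mathfrak{sl}_2$, the centre $\mathfrak{z}$ contributes $Z_1$ in degree $1$ and the rank-one simple algebra $\mathfrak{sl}_2$ contributes a single generator $Z_2$ in degree $3$, giving $H^*_c(\GL_2(\BZ_p)) \cong \Lambda(Z_1, Z_2)$. I would confirm this on the fusion side: with $H^*_c(S') \cong \Lambda(a_u, a_l, a_h)$ on the three root directions, the torus annihilates everything of nonzero weight, and $w$, acting by $a_u \leftrightarrow a_l$ and $a_h \mapsto -a_h$, leaves exactly the top class $a_u a_l a_h$ in degree $3$ as the only surviving positive-degree invariant.

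For $p = 3$ the situation changes because $G$ now contains elements of order $3$, namely those with characteristic polynomial $x^2 + x + 1$; modulo $3$ this polynomial equals $(x-1)^2$, so such an element reduces to a nontrivial unipotent and is therefore conjugate into $S$. Consequently $S$ has $3$-torsion, $S'$ is no longer torsion-free, and $H^*_c(S')$ is infinite-dimensional. Here I would invoke the cohomology of the extraspecial pro-$3$ group computed in Example \ref{ex:extraspecial}, which carries a polynomial generator $X$ in degree $4$ reflecting the periodicity forced by the order-$3$ elements. Imposing invariance under the torus and the Weyl group then leaves $X$ together with two exterior classes $Z_2, Z_3$ in degree $3$, yielding $H^*_c(\GL_2(\BZ_3)) \cong \BF_3[X] \otimes \Lambda(Z_1, Z_2, Z_3)$.

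The main obstacle will be the case $p = 3$. Whereas for $p > 3$ the answer is essentially dictated by torsion-freeness, the Lazard isomorphism, and Poincaré duality of dimension $4$, at $p = 3$ the presence of $3$-torsion makes both $H^*_c(S')$ and the fusion system $\bar\CF$ genuinely more complicated. The delicate point is to determine $\bar\CF$ precisely enough — in particular its essential subgroups arising from the centralizers of the order-$3$ elements — to decide exactly which classes of Example \ref{ex:extraspecial} are stable. I would need to verify that the polynomial class $X$ and the second degree-$3$ class $Z_3$ both survive as invariants (unlike for $p > 3$), and finally to pin down the ring structure, confirming that $X$ is genuinely polynomial and that the $Z_i$ generate an exterior algebra over it.
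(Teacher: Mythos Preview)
Your approach diverges substantially from the paper's, which does not split off the centre or pass to $\SL_2$, but instead runs the spectral sequence of Theorem~\ref{T:introduction_SpectralSequence} for the strongly closed congruence subgroup $K_1\unlhd S$, computes the $\CF$-invariants on $E_2$ explicitly via a finite generating set for $\CF_S(G)$, and then resolves the differentials (trivially for $p=3$ by comparison with a finite quotient, and for $p>3$ by forcing them via the finiteness of $H^*_c(G)$).

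There are two genuine gaps in your plan. For $p=3$, the Sylow pro-$3$ subgroup $S'=S\cap\SL_2(\BZ_3)$ is \emph{not} the group ${\BZ_3}^{1+2}_+$ of Example~\ref{ex:extraspecial}: the latter is $(\BZ_3\times\BZ_3)\rtimes\BZ/3$, of $3$-adic dimension~$2$, whereas $S'$ has dimension~$3$ (since $\SL_2$ is $3$-dimensional). Moreover the polynomial generator $x'$ in Theorem~\ref{T:coho_extraspecial_padics} sits in degree~$2$, not~$4$, so that computation cannot supply the class $X$ you need. You would have to compute $H^*_c(S')$ from scratch, and this ring is genuinely more complicated than that of ${\BZ_3}^{1+2}_+$; this is precisely why the paper uses the spectral sequence rather than a direct stable-elements computation.

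For $p>3$, your fusion-side confirmation is incorrect as written: the Weyl element $w=\left(\begin{smallmatrix}0&1\\-1&0\end{smallmatrix}\right)$ does \emph{not} normalize $S'$ (it swaps the upper and lower pro-$p$ Iwahori), so there is no action of $w$ on $H^*_c(S')$ to take invariants under. The fusion contributed by $w$ is only visible on proper subgroups such as $K_1\cap\SL_2$, and extracting stable elements means comparing two \emph{restrictions}, not taking invariants of an $S'$-automorphism. Your Lazard argument is also incomplete: the isomorphism $H^*_c(G;\BF_p)\cong H^*(\mathfrak{gl}_2;\BF_p)$ does not follow just from $G$ being $p$-torsion-free with a uniform open subgroup --- Lazard's theorem gives you $H^*_c(K_1)$, not $H^*_c(G)$, and bridging the gap (e.g.\ showing $S$ is $p$-saturable and identifying the stable elements) is exactly the work the paper's spectral sequence does.
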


The result for $p=3$ was already obtained in \cite{Henn1998} employing
different tools. The classes of degrees $2p-3$ and $2p-2$ found by
Aguad\'e \cite[Corollary 1.2]{Aguade1980} in the ring
$H^*(\GL_2(\BZ/p);\BF_p)$ survive to $H_c^*(\GL_2(\BZ_p);\BF_p)$ only
for $p=3$. Note that both rings in the statement of Theorem
\ref{T:introduction_cohoGL2p>=3} are Cohen-Macaulay and hence, by
Benson-Carlson duality for $p$-adic analytic groups \cite[\S
  12.3]{Benson2004}, they satisfy the Poincar\'e duality after
quotienting out the polynomial parts. The dualizing degrees are $7$
and $4$, respectively, and they are obtained after a degree shift of $4$, which is
the $p$-adic dimension of $\GL_2(\BZ_p)$. 

\textbf{Remarks and notation:} In Theorems
\ref{T:introduction_Cartan_Eilenberg}, \ref{T:introduction_cohomology_profinitegroup} and
\ref{T:introduction_SpectralSequence}, taking stable elements with
respect to the category $\CF^\circ$ instead of $\CF$ gives isomorphic
rings. Here, $\CF^\circ$ is the full subcategory of $\CF$ with objects
the open subgroups of $S$. We write $H^*_c(\cdot)$ instead of
$H^*_c(\cdot;\BF_p)$ for the continuous mod-$p$ cohomology ring and
$H^*(\cdot)$ instead of $H^*(\cdot;\BF_p)$ for the singular and for the
discrete mod-$p$ cohomology rings.
For a group $G$ and an element $g\in G$, we
let $c_g$ denote the conjugation morphism which maps $x$ to
$c_g(x)=\ls gx=gxg^{-1}$. If there is no confusion, we denote the
elements in a quotient by the same symbols as the elements that they
represent. As usual in the context of profinite groups, subgroups are assumed to be closed, generation is considered topologically, and homomorphisms are continuous.

\textbf{Outline of the paper:} In Section~\ref{section:profusion_systems} we present the necessary background on pro-fusion systems, in Section~\ref{section:cohomologyandclassifyingspaces}, we
briefly discuss cohomology, classifying spaces and $p$-completion for profinite groups, in
Section~\ref{section:stable_elements_and_a_spectral_sequence}, we
prove Theorems~\ref{T:introduction_Cartan_Eilenberg},
~\ref{T:introduction_cohomology_profinitegroup}
and~\ref{T:introduction_SpectralSequence}, and in Section \ref{section:coho_GL2}, we
determine the cohomology ring of $\GL_2(\mathbb Z_p)$, for $p=3$ and $p>3$ separately.

\textbf{Acknowledgements:} We are grateful to Peter Symonds for
helpful conversations, in particular, those regarding stable elements,
see Remark \ref{rmk:stableelements_versions}. We are also grateful to
Jon Gonz\'alez-S\'anchez for useful advice concerning the computations 
in Section 5.  

\section{Pro-fusion systems} \label{section:profusion_systems} 

In this section, we briefly introduce pro-fusion systems on pro-$p$
groups, and we review the
known results that we will need in the sequel.
Loosely, pro-fusion systems on pro-$p$ groups generalize the fusion
systems on finite $p$-groups.
We refer the reader to~\cite{StancuSymonds2014} and to~\cite{BLOsurvey}
for more background and more details on the topic.

First, recall that a fusion system on a finite $p$-group $S$ is a
category whose objects are the subgroups of $S$ and the morphisms are
injective group homomorphisms subject to certain axioms. 

\begin{dfn}[Morphisms of fusion systems]
Let $\CF$ and $\CG$ be fusion systems on finite $p$-groups $S$ and $T$,
respectively. A {\em morphism of fusion systems} $(\alpha,A)\colon \CF
\to \CG$ consists of a group homomorphism $\alpha\colon S\to T$ and a
functor $A\colon \CF \to \CG$ satisfying the following properties.
\begin{enumerate}
\item $A(P) = \alpha(P)$ for each subgroup $P$ of $S$.
\item For each morphism $\varphi\colon P\to Q$ in $\CF$ we have a
  commutative diagram
\[
\xymatrix{
	P \ar[r]^-{\alpha} \ar[d]_{\varphi} & \alpha(P) \ar[d]^{A(\varphi)} \\
	Q \ar[r]^-{\alpha} & \alpha(Q).
}
\]
\end{enumerate}
\end{dfn}

Note that if $(\alpha,A)\colon \CF \to \CG$ is a morphism of fusion
systems, then the homomorphism $\alpha$ completely determines the
functor $A$. Explicitly, in the above definition, $A(\varphi)$ is
defined by the formula $A(\varphi)(\alpha(u)) = \alpha(\varphi(u))$
for $u\in P$. From this formula, we see immediately that if a
homomorphism $\alpha\colon S\to T$ defines a morphism of fusion
systems from $\CF$ to $\CG$, then $\Ker(\alpha)$ is a strongly
$\CF$-closed subgroup of $S$. Conversely, if $\alpha\colon S\to T$ is a
homomorphism such that $\Ker(\alpha)$ is strongly $\CF$-closed in $S$,
then for any $\CF$-morphism $\varphi\colon P\to Q$, the induced
homomorphism $\alpha_\ast(\varphi)\colon \alpha(P)\to \alpha(Q)$
sending $\alpha(u)$ to $\alpha(\varphi(u))$ for $u\in P$ is
well-defined. In this case, $\alpha$ defines a morphism of fusion
systems from $\CF$ to $\CG$ if and only if $\alpha_\ast(\varphi) \in
\CG$ whenever $\varphi\in \CF$.


Now we give a slightly expanded reformulation of pro-fusion systems on
pro-$p$ groups given in~\cite{StancuSymonds2014}.
Recall that a poset $(I,\geq)$ is {\em directed} if for all
$i,j\in I$ there exists $k\in I$ such that $k\geq i$ and $k\geq j$.

\begin{dfn}[Pro-fusion systems] \label{D:pro-fusion system}
Suppose that we have an inverse system of fusion systems $\CF_i$ on
finite $p$-groups $S_i$, indexed by a directed poset $I$. This means
that there is a functor from $I$ (as a category) to the category of
fusion systems on finite $p$-groups. Explicitly, we have a fusion
system $\CF_i$ on a finite $p$-group $S_i$ for all $i\in I$ and we have
morphisms of fusion systems $(f_{ij},F_{ij})\colon \CF_j \to \CF_i$
for all $i, j\in I$ with $j\geq i$ such that $f_{ii} = \id_{S_i}$ (and
hence $F_{ii} = \id_{\CF_i}$) for all $i\in I$, and such that
$f_{ij}f_{jk} =f_{ik}$ (and hence $F_{ij}F_{jk} = F_{ik}$) for all $i,j,k\in I$ with
$k\geq j\geq i$. Set $S = \varprojlim_{i\in I} S_i$, and let $f_i\colon
S\to S_i$ be the canonical homomorphisms and $N_i = \Ker(f_i)$ for all
$i\in I$.
Then $S$ is a pro-$p$ group and $\{ N_i \mid i\in I \}$ is a basis of open
neighborhoods of $1$ in $S$ such that $N_j \leq N_i$ if $j\geq i$. We
define a category $\CF$ as follows. The objects of $\CF$ are the
closed subgroups of $S$. If $P$ is a closed subgroup of $S$, we have
$P = \varprojlim_{i\in I} f_i(P)$. If $Q$ is another closed subgroup
of $S$, then the functors $F_{ij}\colon \CF_j \to \CF_i$ define an
inverse system of finite sets $\Hom_{\CF_i}(f_i(P), f_i(Q))$ and we
have a map 
\[
\begin{array}{ccl}
	\varprojlim_{i\in I} \Hom_{\CF_i}(f_i(P), f_i(Q)) &\xrightarrow{\theta_{P,Q}} &\Hom_c(P, Q) \\
	(\varphi_i)_{i\in I} &\mapsto & (\varphi\colon (x_i) \mapsto (\varphi_i(x_i)))_{i\in I}.
\end{array}
\]
The relation between the maps $\varphi_i$ and $\varphi$ can be summarized by the following commutative diagram
\[
\xymatrix@C=1in{
	P \ar[r]^{f_j}\ar[d]_{\varphi} \ar@/^1.5pc/[rr]^{f_i} & f_j(P) \ar[r]^{f_{ij}}\ar[d]^{\varphi_j} & f_i(P)\ar[d]^{\varphi_i} \\
	Q \ar[r]_{f_j} \ar@/_1.5pc/[rr]_{f_i} & f_j(Q) \ar[r]_{f_{ij}} & f_i(Q)
}
\]
where $i, j\in I$ with $j \geq i$: by the universal property of the
inverse limit, $\varphi$ is the unique homomorphism making the left
square of the above diagram commutative for all $j\in I$. Here
$\Hom_c(P,Q)$ denotes the set of continuous homomorphisms from $P$ to
$Q$ with respect to the profinite topology. Indeed $\varphi\colon P\to
Q$ is continuous because we have $\varphi(P\cap N_i) \subseteq Q\cap
N_i$ for all $i\in I$ by the above diagram. We set $\Hom_\CF(P,Q)$ to
be the image of the above map $\theta_{P,Q}$. Since $\theta_{P,Q}$ is
injective,  we may identify 
\[
	\Hom_\CF(P, Q) = \varprojlim_{i\in I} \Hom_{\CF_i}(f_i(P), f_i(Q)).
\]
It is straightforward to see that $\CF$ is indeed a category under the
usual composition of maps. We say that $\CF$ is a {\em pro-fusion system} on the pro-$p$ group $S$.
\end{dfn}

Pro-fusion systems satisfy all the axioms of fusion systems
(cf.\ \cite[Lemmas 2.9, 2.12]{StancuSymonds2014}). In particular,
every fusion system $\CF$ on a finite $p$-group is a pro-fusion system
because it is isomorphic to the pro-fusion system defined by the
constant inverse system $\{ \CF \}$. Also note that the inclusions
$\varphi(P\cap N_i) \subseteq Q\cap N_i$ show the continuity of the
$\CF$-morphisms $\varphi$, and imply that the $N_i$ are strongly
$\CF$-closed subgroups of $S$. 

The notion of morphisms of pro-fusion systems on pro-$p$ groups is
identical to that of fusion systems on finite $p$-groups, with the
additional requirements that homomorphisms be continuous and that
subgroups be closed. 

\begin{dfn}[Morphisms of pro-fusion systems]
Let $\CF$, $\CG$ be pro-fusion systems on pro-$p$ groups $S$, $T$,
respectively. A {\em morphism of pro-fusion systems} $(\alpha,A)\colon
\CF \to \CG$ consists of a continuous homomorphism $\alpha\colon S\to
T$ and a functor $A\colon \CF \to \CG$ satisfying the following
properties. 
\begin{enumerate}
\item $A(P) = \alpha(P)$ for each closed subgroup $P$ of $S$.
\item For each morphism $\varphi\colon P\to Q$ in $\CF$ we have a commutative diagram
\[
\xymatrix{
	P \ar[r]^-{\alpha} \ar[d]_{\varphi} & \alpha(P) \ar[d]^{A(\varphi)} \\
	Q \ar[r]^-{\alpha} & \alpha(Q)
}
\]
\end{enumerate}
\end{dfn}

Morphisms of pro-fusion systems can be composed in the obvious way,
thus forming the category of pro-fusion systems. As for fusion
systems, if $(\alpha,A)\colon \CF \to \CG$ is a morphism of pro-fusion
systems, then the continuous homomorphism $\alpha$ completely
determines the functor $A$, and a continuous homomorphism
$\alpha\colon S\to T$ defines a morphism of pro-fusion systems from
$\CF$ to $\CG$ if and only if $\Ker(\alpha)$ is strongly $\CF$-closed
in $S$ and for each morphism $\varphi\colon P\to Q$ in $\CF$ the
induced homomorphism $\alpha_\ast(\varphi)\colon \alpha(P)\to
\alpha(Q)$ belongs to $\CG$.  

\begin{exm}[Canonical morphisms]
\label{example:canonicalmorphisms}
Let $\CF$ be the pro-fusion system defined by an inverse system of
fusion systems $\CF_i$ on finite $p$-groups $S_i$. We use the
notations in Definition~\ref{D:pro-fusion system}. Since each $N_i =
\Ker(f_i)$ is an open strongly $\CF$-closed subgroup of $S$, the
canonical homomorphism $f_i\colon S\to S_i$ is continuous and sends
each morphism $\varphi = (\varphi_i)$ in $\CF$ to the morphism
$\varphi_i$ in $\CF_i$. Thus $f_i$ defines a morphism of pro-fusion
systems $(f_i,F_i)\colon \CF \to \CF_i$. We call the morphisms
$(f_i,F_i)\colon \CF \to \CF_i$ the {\em canonical morphisms}
associated to the pro-fusion system $\CF$. 
\end{exm}

The proof of the next proposition is easy and we omit it.

\begin{prp}[Pro-fusion systems as inverse limits]
Let $\CF$ be the pro-fusion system defined by an inverse system of
fusion systems $\CF_i$ on finite $p$-groups. Then $\CF = \varprojlim_i
\CF_i$ is the inverse limit of the $\CF_i$ in the category of pro-fusion
systems. 
\end{prp}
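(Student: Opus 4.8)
The plan is to verify the universal property of the inverse limit directly in the category of pro-fusion systems. First I would record that the canonical morphisms $(f_i,F_i)\colon\CF\to\CF_i$ of Example~\ref{example:canonicalmorphisms} form a cone over the inverse system, i.e.\ $(f_{ij},F_{ij})\circ(f_j,F_j)=(f_i,F_i)$ whenever $j\geq i$. On the underlying continuous homomorphisms this is exactly the relation $f_{ij}f_j=f_i$ satisfied by the structure maps of $S=\varprojlim_i S_i$, and the functor parts then automatically coincide because a morphism of pro-fusion systems is determined by its underlying homomorphism.

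Next, let $\CG$ be a pro-fusion system on a pro-$p$ group $T$ together with a compatible family of morphisms $(g_i,G_i)\colon\CG\to\CF_i$, so that $(f_{ij},F_{ij})\circ(g_j,G_j)=(g_i,G_i)$ for all $j\geq i$, whence $f_{ij}g_j=g_i$ on the underlying groups. By the universal property of $S=\varprojlim_i S_i$ there is a unique continuous homomorphism $\alpha\colon T\to S$ with $f_i\alpha=g_i$ for all $i$. It then remains to check that $\alpha$ defines a morphism of pro-fusion systems $\CG\to\CF$, which by the criterion recorded after the definition of morphisms amounts to two conditions.

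For the first, note that $x\in\Ker(\alpha)$ if and only if $g_i(x)=f_i(\alpha(x))=1$ for all $i$, so $\Ker(\alpha)=\bigcap_i\Ker(g_i)$; each $\Ker(g_i)$ is strongly $\CG$-closed because $(g_i,G_i)$ is a morphism, and an intersection of strongly $\CG$-closed subgroups is again strongly $\CG$-closed, so $\Ker(\alpha)$ is strongly $\CG$-closed in $T$. For the second, fix a $\CG$-morphism $\varphi\colon P\to Q$ and consider the induced homomorphism $\alpha_\ast(\varphi)\colon\alpha(P)\to\alpha(Q)$, $\alpha(u)\mapsto\alpha(\varphi(u))$. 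Applying $f_i$ and using $f_i\alpha=g_i$ shows that the $f_i$-component of $\alpha_\ast(\varphi)$ is precisely $(g_i)_\ast(\varphi)\colon g_i(P)\to g_i(Q)$, which lies in $\CF_i$ since $(g_i,G_i)$ is a morphism. These components are compatible under the $F_{ij}$, so using the identification $\Hom_\CF(\alpha(P),\alpha(Q))=\varprojlim_i\Hom_{\CF_i}(f_i(\alpha(P)),f_i(\alpha(Q)))$ of Definition~\ref{D:pro-fusion system}, together with $f_i(\alpha(P))=g_i(P)$, we conclude $\alpha_\ast(\varphi)\in\CF$.

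Finally, taking $A$ to be the functor determined by $\alpha$, the equality $(f_i,F_i)\circ(\alpha,A)=(g_i,G_i)$ holds because it holds on underlying homomorphisms. Uniqueness is immediate: any morphism $(\beta,B)\colon\CG\to\CF$ with $(f_i,F_i)\circ(\beta,B)=(g_i,G_i)$ satisfies $f_i\beta=g_i$, hence $\beta=\alpha$ by the universal property of $\varprojlim_i S_i$, and then $B=A$ since the homomorphism determines the functor. I expect the only mildly delicate point to be the second condition above, namely translating membership of $\alpha_\ast(\varphi)$ in $\CF$ into the compatibility of its finite-level components; but this is handled cleanly by the inverse-limit description of the morphism sets of $\CF$.
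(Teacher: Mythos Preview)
Your proof is correct and is precisely the natural direct verification of the universal property; the paper itself omits the proof entirely, remarking only that it is easy, so there is no alternative argument to compare against. The one point worth making explicit in a final write-up is that $\alpha(P)$ is closed in $S$ (which follows since $P$ is compact and $\alpha$ is continuous), so that $\alpha_\ast(\varphi)$ really is a morphism between objects of $\CF$.
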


Saturation for pro-fusion systems may be defined in a similar fashion
to that in the finite case, see \cite[Definition
  2.16]{StancuSymonds2014}. The next discussion illustrates some of
the subtleties underneath the concepts of (pro-)saturation.

\begin{dfn}\label{def:prosaturated_profusion_system}
A {\em pro-saturated fusion system} is a pro-fusion system $\CF =
\varprojlim_{i\in I} \CF_i$ on a pro-$p$ group $S=\varprojlim_{i\in
  I}S_i$, where each $\CF_i$ is a saturated fusion system on the
finite $p$-group $S_i$. 
\end{dfn}

A profinite group or pro-fusion system is termed \emph{countably based} if it can be expressed as an inverse limit indexed by the set of natural numbers $\BN$. 

\begin{lem}[{\cite[Theorems 3.9]{StancuSymonds2014}}]
A pro-fusion system $\CF$ on a pro-$p$ group $S$ is countably based if
and only if $S$ is countably based (as pro-$p$ group).
\end{lem}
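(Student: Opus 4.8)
The plan is to settle the trivial implication at once and then reduce the substantive one to a cofinality argument among strongly $\CF$-closed subgroups. If $\CF$ is countably based, say $\CF = \varprojlim_{n\in\BN}\CF_n$ with each $\CF_n$ a fusion system on a finite $p$-group $S_n$, then by Definition~\ref{D:pro-fusion system} its underlying pro-$p$ group is $S = \varprojlim_{n\in\BN}S_n$, which is countably based; this direction needs no further work. For the converse I assume $S$ is countably based and must re-express $\CF = \varprojlim_{i\in I}\CF_i$ over a countable index. The obstacle to thinning out $I$ directly is that the directed poset $I$ may admit no countable cofinal subset, even though $S$ has only countably many open subgroups. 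I would therefore change the indexing set: let $\CN$ be the set of open strongly $\CF$-closed subgroups of $S$, ordered by reverse inclusion. Since an intersection of two open strongly $\CF$-closed subgroups is again open and strongly $\CF$-closed, $\CN$ is directed, and it contains every $N_i = \Ker(f_i)$. For each $N\in\CN$ the quotient $S/N$ is a finite $p$-group, and as $N$ is strongly $\CF$-closed the induced maps of $\CF$-morphisms are well defined and yield a quotient fusion system $\CF/N$ on $S/N$; the canonical surjections $S/N' \onto S/N$ for $N'\subseteq N$ organize these into an inverse system over $\CN$.

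The heart of the argument is the natural isomorphism of pro-fusion systems
\[
\CF \;\cong\; \varprojlim_{N\in\CN}\CF/N,
\]
which I would verify on Hom-sets. The comparison map sends $\varphi\in\Hom_\CF(P,Q)$ to the family of its induced maps on the various $S/N$. Injectivity is immediate, since a continuous homomorphism is determined by its induced maps on any neighborhood basis of $1$ and $\bigcap_{N\in\CN}N = 1$. For surjectivity, start from a compatible family $(\chi_N)_{N\in\CN}$ with $\chi_N\in\Hom_{\CF/N}(f_N(P),f_N(Q))$; its restriction to a descending basis in $\CN$ glues, via the universal property used to define $\theta_{P,Q}$, to a single continuous homomorphism $\varphi\colon P\to Q$. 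The point to check is that $\varphi$ actually lies in $\CF$, that is, that each induced map $\varphi_i$ on $f_i(P)\to f_i(Q)$ belongs to $\Hom_{\CF_i}(f_i(P),f_i(Q))$. This holds because $\varphi_i$ coincides with $\chi_{N_i}$ by compatibility of the family (using $N_i\in\CN$), and $\Hom_{\CF/N_i}(f_i(P),f_i(Q))\subseteq\Hom_{\CF_i}(f_i(P),f_i(Q))$ since the morphisms of $\CF/N_i$ are by definition induced maps of $\CF$-morphisms. I expect this surjectivity step — reconciling a family indexed by $\CN$ with the original data indexed by $I$ — to be the main technical obstacle.

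Finally I would extract a countable index. As $S$ is countably based it has only countably many open subgroups, so $\CN = \{L_1,L_2,\dots\}$ is countable; putting $N^{(n)} = L_1\cap\cdots\cap L_n$ gives a descending chain in $\CN$ with $\bigcap_n N^{(n)} = \bigcap_k L_k \subseteq \bigcap_i N_i = 1$. Hence $\{N^{(n)}\}_{n\in\BN}$ is a neighborhood basis of $1$, so it is cofinal in $\CN$, and therefore
\[
\CF \;\cong\; \varprojlim_{N\in\CN}\CF/N \;\cong\; \varprojlim_{n\in\BN}\CF/N^{(n)},
\]
which exhibits $\CF$ as countably based and completes the proof.
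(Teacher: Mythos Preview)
The paper does not supply its own proof of this lemma; it merely records the statement and cites \cite[Theorem~3.9]{StancuSymonds2014}. So there is nothing in the present paper to compare your argument against.

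That said, your approach is the natural one and is essentially what Stancu--Symonds do: reindex by the directed set $\CN$ of open strongly $\CF$-closed subgroups of $S$ (which contains every $N_i$ and hence is a neighborhood basis of $1$), identify $\CF$ with $\varprojlim_{N\in\CN}\CF/N$, and then exploit the countability of the open subgroups of $S$ to pass to a cofinal descending chain. The surjectivity step you single out is indeed the crux, and your reasoning there is correct once the quotient $\CF/N$ is defined carefully; Stancu--Symonds make this precise, and the present paper later relies on their \cite[Propositions~4.4, 4.5]{StancuSymonds2014}, which are built on exactly this construction. One minor point worth making explicit: when you write that $\varphi_i$ coincides with $\chi_{N_i}$, you are silently identifying $S/N_i$ with $f_i(S)\subseteq S_i$ via the first isomorphism theorem; this is harmless but should be stated.
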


\begin{thm}[{\cite[Theorems 5.1, 5.2]{StancuSymonds2014}}]
If $\CF$ is a pro-saturated fusion system and $\CF$ is countably based, then $\CF$ is saturated.
\end{thm}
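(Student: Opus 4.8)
The plan is to verify the two saturation axioms of the finite theory — the Sylow (automorphism) axiom and the extension axiom — directly for $\CF$, transporting each property from the saturated quotients $\CF_i$ through the canonical projections $f_i\colon S\to S_i$. I use the pro-$p$ formulation of saturation adapted from the finite case: for a closed subgroup $P\leq S$, call $P$ \emph{fully normalized} (resp.\ \emph{fully centralized}) if its normalizer $N_S(P)$ (resp.\ centralizer $C_S(P)$) is maximal among those of the $\CF$-conjugates of $P$, in the sense appropriate to pro-$p$ groups; the Sylow axiom then demands that a fully normalized $P$ be fully centralized with $\Aut_S(P)$ a Sylow pro-$p$ subgroup of the profinite group $\Aut_\CF(P)$, while the extension axiom demands that every $\CF$-morphism $\varphi\colon P\to S$ with $\varphi(P)$ fully centralized extend over the associated normalizer $N_\varphi$. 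As $\CF$ is countably based we take $I=\BN$.

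Both axioms rest on a single compactness input: an inverse system of nonempty finite sets has nonempty inverse limit. Countability of $I$ enters not here but in the \emph{descent} of full normalization. In the finite case, a fully normalized conjugate of any subgroup exists by maximizing the normalizer over the finite set of conjugates; over a countable tower the same effect is obtained by a sequential diagonal argument selecting a conjugate whose finite-level normalizers $N_{S_i}(f_i(P))$ are simultaneously maximal for all large $i$. I would first establish the resulting stabilization statement: $P$ is fully normalized (resp.\ fully centralized) in $\CF$ if and only if $f_i(P)$ is fully normalized (resp.\ fully centralized) in $\CF_i$ for all sufficiently large $i$.

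Granting the stabilization, the Sylow axiom follows by a limiting argument. Fix a fully normalized $P$, so that $\Aut_\CF(P)=\varprojlim_i \Aut_{\CF_i}(f_i(P))$. The delicate point is that the projection $\Aut_S(P)\to\Aut_{S_i}(f_i(P))$ need not be surjective for general $P$, since $f_i(N_S(P))$ may be strictly smaller than $N_{S_i}(f_i(P))$; I would show that full normalization forces $f_i(N_S(P))=N_{S_i}(f_i(P))$ for all large $i$, as otherwise a normalizing element of $f_i(P)$ failing to lift would produce an $\CF$-conjugate of $P$ with strictly larger normalizer. Consequently $\Aut_S(P)$ surjects onto $\Aut_{S_i}(f_i(P))$ cofinally, and by saturation of $\CF_i$ the target is a Sylow $p$-subgroup of $\Aut_{\CF_i}(f_i(P))$. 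A pro-$p$ subgroup of a profinite group that maps onto a Sylow $p$-subgroup of every finite quotient is a Sylow pro-$p$ subgroup, which gives the axiom; the same level-wise comparison yields that $P$ is fully centralized.

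The extension axiom is where I expect the real work. Given $\varphi=(\varphi_i)_i\colon P\to S$ with $\varphi(P)$ fully centralized, saturation of each $\CF_i$ provides an extension of $\varphi_i$ over the finite normalizer $f_i(N_\varphi)$, but these extensions are noncanonical and need not be compatible along the tower. I would assemble them into an inverse system by letting $X_i$ be the finite nonempty set of $\CF_i$-morphisms extending $\varphi_i$ over $f_i(N_\varphi)$, with transition maps given by restriction along $f_{ij}$; by the compactness input $\varprojlim_i X_i$ is then nonempty, and via the identification $\theta_{N_\varphi,S}$ of Definition~\ref{D:pro-fusion system} any point of it is exactly an $\CF$-morphism $\bar\varphi\colon N_\varphi\to S$ extending $\varphi$. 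The main obstacle is verifying that the structure maps are well defined — that the profinite normalizer $N_\varphi$ is carried to the finite normalizers $f_i(N_\varphi)$ compatibly and that restriction along $f_{ij}$ sends $X_j$ into $X_i$ — and, using the stabilization of the previous step, that full centralization of $\varphi(P)$ makes each $X_i$ nonempty. Once the inverse system is correctly assembled, the countable compactness argument closes the proof.
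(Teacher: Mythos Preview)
The paper does not prove this theorem. It is stated with attribution to \cite[Theorems 5.1, 5.2]{StancuSymonds2014} and no proof is supplied; the result is invoked as background in Section~\ref{section:profusion_systems}. So there is no argument in the paper against which to compare your proposal.

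That said, your outline is a plausible reconstruction of how such a theorem would be proven, and it identifies the correct pressure points: the descent of full normalization and full centralization along the tower, and the assembly of compatible finite-level extensions into a single $\CF$-morphism via an inverse-limit-of-finite-sets argument. Two places where your sketch is genuinely incomplete rather than merely compressed: first, the claim that full normalization of $P$ forces $f_i(N_S(P))=N_{S_i}(f_i(P))$ cofinally is doing real work and your one-line justification (``a normalizing element failing to lift would produce a conjugate with strictly larger normalizer'') hides a nontrivial construction --- you must actually build such a conjugate in $\CF$, not just in $\CF_i$, and this is where countability and the diagonal selection are used in earnest. Second, for the extension axiom you need not only $f_i(N_\varphi)\subseteq N_{\varphi_i}$ (which is straightforward) but also that the transition maps $X_j\to X_i$ are well defined on the \emph{full} sets $X_j$; an extension of $\varphi_j$ over $f_j(N_\varphi)$ pushes forward under $F_{ij}$ to a morphism on $f_{ij}(f_j(N_\varphi))=f_i(N_\varphi)$ extending $\varphi_i$, so this is fine, but it deserves a sentence rather than being left implicit.
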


\begin{exm}
If $G$ is a profinite group with Sylow pro-$p$ subgroup $S$, then we define the pro-fusion system that $G$ induces on $S$ as the category $\CF_S(G)$ with objects the closed subgroups of $S$ and morphisms  all homomorphisms induced by conjugation by elements of $G$ \cite[Example 2.8]{StancuSymonds2014}. Then $\CF_S(G)$ is a pro-fusion system in the sense of Definition \ref{D:pro-fusion system}, it is pro-saturated, and it is saturated by \cite[Example 2.18]{StancuSymonds2014}.
\end{exm}


\section{Cohomology, classifying spaces and $p$-completion}
\label{section:cohomologyandclassifyingspaces}

In this section, we first discuss continuous cohomology of profinite
groups and some of its properties. Since we are concerned about $p$-local
phenomena, we always consider coefficients in the trivial module
$\BF_p$. We refer the reader to \cite{Wilson1998},
\cite{RibesZalesskii2010} for more details and background.
Then we discuss classifying spaces and $p$-completion for simplicial sets \cite{BousfieldKan1972}
and for profinite simplicial sets \cite{Morel1996},
\cite{Goerss1998}. 

Let $G$ be a profinite group and let $H^*_c(G)$ be its continuous
mod-$p$ cohomology ring. The group
$H^*_c(G)$ can be recovered from the finite images of $G$, i.e., if $G\cong
\varprojlim_{i\in I} G_i$ for some inverse system of finite groups
$G_i$, then for all $*\geq 0$,  
\begin{equation}\label{equ:cohomology_is_inductive_limit}
H_c^*(G)\cong  \varinjlim_{i\in I} H^*(G_i).
\end{equation}

If $G$ is profinite and $K$ is a closed normal subgroup of $G$, there
exists a first quadrant cohomological spectral sequence converging to
$H_c^*(G)$ \cite[\S 7.2]{RibesZalesskii2010}. It is the
Lyndon-Hochschild-Serre spectral sequence, LHS s.s. for short, 
\begin{equation}\label{equ:LHSss_profinite}
E_2^{n,m}=H_c^n(G/K;H_c^m(K))\Rightarrow H^{n+m}_c(G).
\end{equation}
It is easy to check that this is a spectral sequence of
$\BF_p$-algebras.
It is the profinite counterpart to the standard
Lyndon-Hochschild-Serre spectral sequence for abstract (discrete)
groups. Below, in Theorem
\ref{T:coho_extraspecial_padics}, we provide an example of computation using
\eqref{equ:cohomology_is_inductive_limit} and
\eqref{equ:LHSss_profinite}.

\begin{exm}\label{ex:extraspecial}
Consider the
extraspecial group of order $p^3$ and exponent $p$, 
\begin{equation}\label{equ:presentation_of_p1+2+}
p^{1+2}_+=\langle A,B,C\;|\;A^p=B^p=C^p=[A,C]=[B,C]=1,[A,B]=C\rangle.
\end{equation}
The mod-$p$ cohomology ring $H^*(p^{1+2}_+)$ was computed by Leary in
\cite{Leary1991}. Define ${\BZ_3}^{1+2}_+$ to be the $3$-adic version of
the extraspecial group $3^{1+2}_+$, namely
${\BZ_3}^{1+2}_+=(\BZ_3\times\BZ_3)\rtimes \BZ/3$, with $\BZ/3$-action
on $\BZ_3\times\BZ_3$ given by the integral matrix 
$\left(\begin{smallmatrix} 1& -3\\1 &-2\end{smallmatrix}\right)$.
\end{exm}

\begin{thm}\label{T:coho_extraspecial_padics}
The continuous mod-$3$ cohomology of the group ${\BZ_3}^{1+2}_+$ is given by
\[
H^*_c({\BZ_3}^{1+2}_+;\BF_3)\cong \BF_3[x']\otimes\Lambda(y,y',Y,Y')/\{yy',yY,y'Y',YY',yY'-y'Y\},
\]
with degrees $|y|=|y'|=1$, $|Y|=|Y'|=|x'|=2$.
\end{thm}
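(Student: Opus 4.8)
The plan is to run the Lyndon--Hochschild--Serre spectral sequence \eqref{equ:LHSss_profinite} for the closed normal subgroup $K=\BZ_3\times\BZ_3$ of $G:={\BZ_3}^{1+2}_+$, with quotient $G/K=\BZ/3=\langle\sigma\rangle$. First I would compute the fibre cohomology. Since $\BZ_3$ has cohomological dimension one, \eqref{equ:cohomology_is_inductive_limit} gives $H^*_c(\BZ_3)=\Lambda(t)$ with $|t|=1$ (concretely, in the finite quotients $\BZ/3^n$ the degree-$2$ Bockstein classes map to zero under the transition maps, hence die in the colimit). By Künneth, $H^*_c(K)=\Lambda(t_1,t_2)$, which is $1$-dimensional in degrees $0$ and $2$ and $2$-dimensional in degree $1$. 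Next I would determine the $\BZ/3$-action: reducing the given integral matrix modulo $3$ yields a single unipotent Jordan block, so $H^1_c(K)$ is the $2$-dimensional indecomposable $\BF_3[\BZ/3]$-module $V$, while $H^0_c(K)$ and $H^2_c(K)$ (on which $\sigma$ acts by $\det=1$) are trivial. A short computation of $H^*(\BZ/3;V)$, e.g.\ from the extension $0\to\BF_3\to V\to\BF_3\to 0$, shows it is $1$-dimensional in every degree, so that the $E_2$-page
\[
E_2^{n,m}=H^n(\BZ/3;H^m_c(K))
\]
is $1$-dimensional for $0\le m\le 2$ and all $n\ge0$, and zero otherwise.

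Because $G=K\rtimes\BZ/3$ is a split extension, the inflation $H^*(\BZ/3)\to H^*_c(G)$ is split injective (with retraction the restriction to the complement), so the bottom row $E_2^{\bullet,0}$ survives to $E_\infty$ and receives no differential. This already kills every differential except possibly $d_2\colon E_2^{\bullet,2}\to E_2^{\bullet,1}$; and since row $2$ is the free rank-one module over the bottom row $H^*(\BZ/3)$ generated by $t_1t_2\in E_2^{0,2}$, the Leibniz rule collapses the whole obstruction to the single value $d_2(t_1t_2)$. Showing $d_2(t_1t_2)=0$ is the hard part: $t_1t_2$ is \emph{not} a product of classes already present in the edge $E_2^{0,1}=H^1_c(K)^{\BZ/3}$ (only the invariant $t_1$ lives there), so no formal Leibniz argument applies.

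To overcome this I would compare with the finite quotient $3^{1+2}_+=G/3K$ via the induced morphism of spectral sequences $E_\bullet(3^{1+2}_+)\to E_\bullet(G)$. On $E_2$ this map is induced by $K\to K/3K$, which is an isomorphism on $H^1$ and sends the exterior class $t_1t_2$ to $t_1t_2$; in particular it is an isomorphism on the target $E_2^{2,1}$. Leary's computation of $H^*(3^{1+2}_+)$ \cite{Leary1991} shows that $t_1t_2$ is a permanent cycle there (equivalently $\dim_{\BF_3}H^2(3^{1+2}_+)=4$, so, the bottom row and $E^{1,1}$ being protected, nothing can be carried out of $E_2^{0,2}$). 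Hence $d_2(t_1t_2)=0$ in the finite sequence, and therefore, by naturality, in the pro-$3$ sequence as well. Thus the spectral sequence collapses at $E_2$, and the total dimensions $1,2,3,3,3,\dots$ match those of the claimed ring.

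Finally I would resolve the multiplicative extension. The collapse identifies the generators: $y$ as the inflation of the degree-$1$ class of $\BZ/3$, $y'$ as the invariant $t_1\in E_\infty^{0,1}$, $x'$ as the inflation of the polynomial generator of $H^2(\BZ/3)$ (of infinite height, since $\BZ/3$ is a retract of $G$), and $Y,Y'$ as the degree-$2$ classes detected in $E_\infty^{1,1}$ and $E_\infty^{0,2}$, with $Y'$ the one restricting nontrivially to $K$. I would then pin down all products, and rule out hidden extensions, by restricting to $K$ and to the complement $\BZ/3$ and by using the $H^*(\BZ/3)$-module structure of $H^*(\BZ/3;H^1_c(K))$; this yields $yy'=yY=y'Y'=YY'=0$ and $yY'=y'Y$, and exhibits $H^*_c(G)$ as a free $\BF_3[x']$-module on $\{1,y,y',Y,Y',yY'\}$. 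This is precisely the stated ring, so beyond the collapse the only remaining work is this routine but careful bookkeeping of products.
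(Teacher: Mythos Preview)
Your approach is essentially the paper's: run the LHS spectral sequence for $K=\BZ_3\times\BZ_3\lhd G$ and establish collapse by comparison with the finite quotient $3^{1+2}_+$ (the paper cites Siegel for the collapse of the finite sequence, you use a dimension count via Leary, but the idea is the same). The one substantive difference is how the ring structure is obtained. The paper defines $y,y',x',Y,Y'$ as the images under $\pi^*\colon H^*(3^{1+2}_+)\to H^*_c(G)$ of Leary's generators of the same names; the relations then come for free from Leary, and one only has to check that the induced relations among the associated-graded classes already present $E_2^{*,*}$, so there are no further relations. Your plan to verify the relations by restricting to $K$ and to the $\BZ/3$-complement and by using the $H^*(\BZ/3)$-module structure is workable but more laborious, and as sketched it does not obviously rule out every hidden extension: for instance, for $YY'$ (in your labelling $Y\in E_\infty^{1,1}$, $Y'\in E_\infty^{0,2}$) both restrictions vanish on both sides, so those alone do not force $YY'=0$ in $H^4_c(G)$. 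Since you already have $\pi^*$ in hand, the cleanest fix is exactly the paper's move: pull the generators back from $H^*(3^{1+2}_+)$ and inherit the relations.
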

\begin{proof}
For $i\geq 1$, let $N_i=3^i\BZ\times3^i\BZ$ and let
$G_i=G/N_i=(\BZ/3^i \times \BZ/3^i)\rtimes \BZ/3$ be the
semidirect product with $\BZ/3$ acting via the integral matrix
$\left(\begin{smallmatrix} 1& -3\\1 &-2\end{smallmatrix}\right)$. In
  particular, $G_1\cong 3^{1+2}_+$ as described
  in~\eqref{equ:presentation_of_p1+2+} and 
  we have surjective group homomorphisms $G_{i+1}\twoheadrightarrow G_i$. We set $G$ to
  be the pro-$3$ group $\varprojlim_{i\in I} G_i\cong
  {\BZ_3}^{1+2}_+$. 

The graded $\BF_3$-modules $H^*(G_i)$ are known to be isomorphic for
all $i\geq 1$ \cite[Proposition
  5.8]{DiazGaraialdeGonzalezSanches2017}, but the rings
$\{H^*(G_i)\}_{i>1}$ are still unknown. Nevertheless, here we
determine the ring $H_c^*(G)$. 

Consider the LHS spectral sequence \eqref{equ:LHSss_profinite}
associated to the normal subgroup $K=\BZ_3\times \BZ_3$ of $G$. Note
that $H_c^*(K)$ is an exterior algebra on two generators of degree
$1$. Moreover, $G/K\cong \BZ/3$ acts on $H_c^1(K)$ via
$\left(\begin{smallmatrix} 1& 1\\0 &1\end{smallmatrix}\right)$ and
  trivially on $H_c^2(K)$. 
Using the results in \cite[Corollary 4(ii)]{Siegel1996}, we obtain the $\BF_3$-generators for the corner of $E^{*,*}_2$,

\[
\xymatrix@=0pt{
&&&&&\\
2& \overline{Y} & \overline{Y}\overline{y'}& \overline{Y}\overline{x'}  & \overline{Y}\overline{y'}\overline{x'} & \overline{Y}\overline{x'}^2 \\
1& \overline{y} & \overline{Y'} & \overline{y}\overline{x'}  & \overline{Y'}\overline{x'}  & \overline{y}\overline{x'}^2 \\
0&1& \overline{y'} &  \overline{x'}  & \overline{y'}\overline{x'}  & \overline{x'}^2 \\
&0 & 1 & 2&3&4 \\
\ar@{-}"1,3"+<-25pt,-6pt>;"5,3"+<-25pt,-6pt>;
\ar@{-}"4,2"+<-15pt,-7pt>;"4,6"+<10pt,-7pt>;
}
\]

Then, $\overline{y},\overline{y'},\overline{x'},\overline{Y},\overline{Y'}$
generate the bigraded algebra $E_2^{*,*}$. The homomorphism $\pi\colon G\to G_1$ 
induces a morphism of spectral sequences from the LHS s.s.~ of $G_1$
to that of $G$ and all five generators of $E^{*,*}_2$ are in the image
of this morphism. By~\cite[Theorem 5(i)]{Siegel1996}, the LHS spectral
sequence of $G_1$ collapses at the second
page, and we have $E_2=E_\infty$ for
the LHS spectral sequence of $G$. Consider now the induced morphism $\pi^*\colon
H^*(G_1)\to H_c^*(G)$ and define $y,y',x',Y,Y'$ to be the image by
$\pi^*$ of the generators of the same name in \cite[Theorem
  7]{Leary1991}. Then they give rise to the aforementioned overlined
generators of $E^{*,*}_2$, and they satisfy the relations: 
\begin{equation}\label{equ:relations_Hc(Z3Z3.C3)}
yy'=yY=y'Y'=YY'=Y^2=Y'^2=0\text{, }yY'=y'Y.
\end{equation}
The corresponding relations between the overlined generators give a
bigraded algebra isomorphic to $E^{*,*}_2$. Hence, $H^*_c(G)$ is
generated by the elements $y,y',x',Y,Y'$ of degrees $1,1,2,2,2$
respectively subject to the relations
\eqref{equ:relations_Hc(Z3Z3.C3)}. 
\end{proof}

The notion of continuous and discrete cohomology groups may be
extended to the categories $\widehat \SSet$ of simplicial profinite
sets and $\SSet$ of simplicial (discrete) sets respectively. In fact,
if $X=\{X_n\}_{n\geq 0}$ belongs to either of these categories, then
$H^*_c(X)$ if $X\in \widehat \SSet$ or $H^*(X)$ if $X\in \SSet$. We
consider cohomology groups of classifying spaces of either finite or
profinite groups. 

\begin{dfn}\label{definition:classifyingspace} Let $G$ be a finite
  group or a profinite group. Then its \emph{classifying space} $BG\in
  \SSet$ or $BG\in \widehat \SSet,$ respectively, is the simplicial set
  or simplicial profinite set, respectively, with $n$-simplices
  $(BG)_n=G\times \ldots \times G$ ($n$-copies). 
\end{dfn}

Here, the face and degeneracy maps are the usual ones, and we have
ring isomorphisms
\begin{equation}\label{equ:cohoG_is_cohoBG}
H^*(G)\cong H^*(BG)\text{ for $G$ finite and }H^*_c(G)\cong H^*_c(BG)\text{ for $G$ profinite.}
\end{equation}
Morel builds in \cite[Proposition 2]{Morel1996} a functor $\widehat
\SSet\to \widehat \SSet$, that we denote as $X\mapsto X_p$, together
with a natural transformation $X\to X_p$. The object $X_p$ is the
fibrant replacement of $X$ in a certain model category structure on
$\widehat \SSet$ for which the weak equivalences are the
$H^*_c(\cdot)$-isomorphisms. In particular, the induced map, 
\begin{equation}\label{equ:coho_fibrant_replacement}
H^*_c(X_p)\to H^*_c(X) \text{ is an isomorphism.}
\end{equation}
Sullivan's profinite completion can be obtained from Morel's construction after forgetting the profinite topology, see \cite[p. 368]{Morel1996}.

\begin{dfn}\label{definition:pcompletion_simplicial_profiniteset}
Let $X\in\widehat\SSet$ be a simplicial profinite set. Then its \emph{$p$-completion} is the simplicial profinite set $X_p$.
\end{dfn}
If $G$ is a profinite group, Equations
\eqref{equ:cohoG_is_cohoBG} and \eqref{equ:coho_fibrant_replacement}
give a ring isomorphism
\begin{equation}\label{equ:cohoBGp_is_cohoG}
H^*_c(G)\cong H^*_c(BG_p),
\end{equation}
i.e., we recover the continuous cohomology of $G$ via the
$p$-completion $BG_p$ of its classifying space. Finally, let $G$ be a
finite group and let $(\BF_p)_\infty BG$ denote the Bousfield-Kan
$p$-completion \cite{BousfieldKan1972} of its classifying space. Note
that $G$ may be considered as a profinite group and $BG$ as a
simplicial profinite set. By \cite[Corollary 3.16]{Goerss1998}, there
is a weak equivalence of simplicial sets, 
\[
(\BF_p)_\infty BG\to |BG_p|,
\]
where $|\cdot|\colon \widehat \SSet\to \SSet$ is the forgetful
functor. So Definition
\ref{definition:pcompletion_simplicial_profiniteset} extends the
notion of the Bousfield-Kan $p$-completion for classifying spaces of
finite groups to the $p$-completion for classifying spaces of
profinite groups. 


\section{Stable elements and a spectral sequence}
\label{section:stable_elements_and_a_spectral_sequence}

We start this section with the definition of a subset of morphisms that generates a 
pro-fusion system. Then we introduce the notion of stable elements in the present
context, before proving Theorems
\ref{T:introduction_Cartan_Eilenberg},
\ref{T:introduction_cohomology_profinitegroup} and
\ref{T:introduction_SpectralSequence}.  


\begin{dfn}\label{def:fin-gen}
Let $\CF$ be a pro-fusion system on a pro-$p$ group $S$ and let $\CM$
be a set of morphisms of $\CF$. We say that $\CF$ is \emph{generated
  by $\CM$} if every morphism in $\CF$ is equal to the composition of
a finite sequence of restrictions of morphisms in $\CM \cup
\Inn(S)$. In this case, we call $\CM$ a set of {\em generators} of
$\CF$. If $\CM$ is finite, we say that $\CF$ is \emph{finitely
  generated}. 
\end{dfn}

As mentioned in the introduction, $\CF^\circ$ denotes the full
subcategory of $\CF$ with objects the open subgroups of $S$.
The notions of generation and finite generation may analogously be  
defined for $\CF^{\circ}$.

\begin{lem} \label{T:F f.g. implies Fopen f.g.}
Let $\CF$ be a pro-fusion system on a pro-$p$ group $S$. If $\CF$ is
finitely generated, then $\CF^\circ$ is also finitely generated. 
\end{lem}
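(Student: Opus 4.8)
The plan is to show that a finite generating set $\CM$ for $\CF$ can be converted into a finite generating set for $\CF^\circ$. The subtlety is that morphisms in $\CM$ may have source and target that are \emph{not} open subgroups of $S$, so they are not themselves morphisms of $\CF^\circ$; conversely, a morphism of $\CF^\circ$ (between open subgroups) decomposes via $\CM$ into restrictions of morphisms whose intermediate sources and targets need not be open. So we cannot simply take $\CM$ itself. First I would recall that every morphism $\varphi\colon P \to Q$ in $\CF^\circ$, with $P$ open, factors through the given finite generators: writing $\varphi = \psi_k \circ \cdots \circ \psi_1$ where each $\psi_\ell$ is a restriction of some $\mu_\ell \in \CM \cup \Inn(S)$, and noting that $P = P_0, P_1 = \psi_1(P_0), \ldots, P_k = Q$ are all open (being continuous images of the open group $P$ under injective maps with open kernel-free sources — more precisely, each $P_\ell$ has finite index in $S$ since $P$ does and the $\psi_\ell$ are injective), the intermediate subgroups appearing are automatically open.

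Given that, the key step is to replace each generator $\mu \in \CM$, say $\mu \colon A \to B$ with $A,B$ possibly non-open, by finitely many restrictions of $\mu$ to open subgroups that still suffice to generate. The natural candidates are the restrictions $\mu|_{A \cap N_i}$ for the open normal subgroups $N_i = \Ker(f_i)$ from Definition~\ref{D:pro-fusion system}. Since $\mu$ maps $A \cap N_i$ into $B \cap N_i$ (the $N_i$ being strongly $\CF$-closed), these are genuine morphisms between open subgroups, hence in $\CF^\circ$. I would then argue that for each of the finitely many generators $\mu$ it suffices to choose a \emph{single} index $i(\mu)$ — and then the restriction $\mu|_{A N_{i(\mu)} \cap \,?}$ — well, more carefully: for an open subgroup $P$, the source $P$ contains some $A \cap N_i$ and, conversely, is covered by such, so the restriction of $\mu$ to $P \cap A$ (when $P \le A$, $P$ open) is a restriction of $\mu|_{A\cap N_i}$ for $i$ large enough that $N_i \le P$. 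Thus taking $\CM^\circ = \{\, \mu|_{A \cap N_{i}} : \mu \in \CM,\ i \in J \,\}$ for a suitable finite $J$ gives a finite candidate generating set.

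The main obstacle, and the point requiring care, is ensuring finiteness of the index set $J$: a priori one might fear needing arbitrarily large $i$ as $P$ shrinks. The resolution is that the decomposition of a morphism of $\CF^\circ$ only ever requires restricting a generator $\mu$ to a subgroup of the form $P \cap A$ where $P$ is open, and any such restriction is itself a restriction of $\mu|_{A \cap N_i}$ once $N_i \le P$; since in \emph{any} factorization the relevant open $P$ can be taken to contain a fixed $N_{i_0}$ (one replaces a restriction to a smaller domain by a restriction of one with domain $A \cap N_{i_0}$, because restriction is transitive and $\mu|_{A \cap N_{i_0}}$ restricts further to $\mu|_{P \cap A}$ whenever $N_{i_0} \le P$), a single index $i_0 = i_0(\mu)$ per generator suffices, and we may take the common refinement over the finitely many $\mu \in \CM$ using that $I$ is directed. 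Concretely, I expect the cleanest writeup to fix one $N_{i_0}$ with $N_{i_0}$ contained in every relevant domain and set $\CM^\circ = \{\, \mu|_{A \cap N_{i_0}} : \mu \in \CM \,\} \cup \Inn(S)$-restrictions, verifying that each restriction $\psi_\ell$ occurring in a factorization of a $\CF^\circ$-morphism is a restriction of a member of $\CM^\circ$. This reduces everything to the transitivity of restriction together with the strong $\CF$-closedness of the $N_i$, both of which are available from the setup in Section~\ref{section:profusion_systems}.
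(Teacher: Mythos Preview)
Your proposal contains a genuine gap, and the workaround you attempt does not succeed. You correctly observe that in a factorization $\varphi = \psi_k \circ \cdots \circ \psi_1$ of an $\CF^\circ$-morphism all the intermediate subgroups $P_\ell$ are open. But you stop one step short of the conclusion: if $\psi_\ell\colon P_{\ell-1}\to P_\ell$ is a restriction of some $\mu\colon A\to B$ in $\CM$, then $P_{\ell-1}\le A$ with $P_{\ell-1}$ open forces $A$ itself to be open (a subgroup containing an open subgroup is open), and then $B\supseteq \mu(A)$ is open as well by \cite[Lemma~2.11]{StancuSymonds2014}. So every generator actually used in such a factorization already lies in $\CF^\circ$, and the paper simply takes $\CM^\circ=\{\mu\in\CM:\mu\text{ is a morphism of }\CF^\circ\}$. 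No replacement or refinement is needed.

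Your proposed replacement of $\mu$ by $\mu|_{A\cap N_i}$ fails on two counts. First, $A\cap N_i$ is open in $S$ only when $A$ is already open, so in precisely the case you worry about these restrictions are \emph{not} morphisms of $\CF^\circ$ and cannot belong to a generating set for it. Second, the restriction argument runs the wrong way: for $\mu|_{P}$ (with $P\le A$ open) to be a restriction of $\mu|_{A\cap N_i}$ you need $P\le A\cap N_i$, hence $P\le N_i$; your condition $N_i\le P$ gives the opposite inclusion, so $\mu|_{A\cap N_i}$ is a restriction of $\mu|_P$, not conversely. Both problems evaporate once you make the paper's observation that the domain $A$ of any generator appearing in the factorization is automatically open.
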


\begin{proof}
Let $\CM$ be a finite set of generators of $\CF$. We show that $\CM^\circ = \{
\varphi\in\CM \mid \varphi \text{ belongs to } \CF^\circ \}$ is a
finite set of generators of $\CF^\circ$. Let $\varphi\colon P\to Q$ be
an $\CF^\circ$-isomorphism. Then there exists a finite sequence of
$\CF$-isomorphisms $\varphi_i\colon P_{i-1} \to P_{i}$ ($1\leq i\leq
n$) such that $P_0 = P$, $P_n = Q$, $\varphi = \varphi_n \circ \cdots
\circ \varphi_1$, where each $\varphi_i$ is a restriction of some
$\CF$-morphism in $\CM\cup\Inn(S)$. Now the $\CF$-isomorphisms
preserve the open subgroups and also their indices in $S$ by
\cite[Lemma 2.11]{StancuSymonds2014}. Thus all the subgroups $P_i$ are
open, and hence all the morphisms $\varphi_i$ are restrictions of some
$\CF^\circ$-morphisms in $\CM\cup\Inn(S)$. This makes $\CM^\circ$ a
finite set of generators of $\CF^\circ$. 
\end{proof}

Next we define the $\CF$-stable elements for general functors with
domain $\CF$.

\begin{dfn}\label{D:GeneralStablElements}
Let $\CF$ be a pro-fusion system on a pro-$p$ group $S$ and consider a
functor $H\colon \CF^{op}\to \Ab$ from the category $\CF$ to the
category of abelian groups. We say that $x \in H(S)$ is {\em
  $\CF$-stable} (resp.\ {\em $\CF^\circ$-stable}) if $H(\varphi)(x) =
H(\iota^S_P)(x)$ for all closed (resp.\ open) subgroups $P\leq S$ and
all $\CF$-morphisms $\varphi\colon P\to S$.  We write $H(S)^{\CF}$
(resp.\ $H(S)^{\CF^\circ}$) for the abelian subgroup of $\CF$-stable
(resp.\ $\CF^\circ$-stable) elements of $H(S)$. 
\end{dfn}

Here and throughout, $\iota^S_P$
denotes the inclusion homomorphism from $P$ to $S$, and 
we write $\res_P^S$ instead of $H(\iota_P^S)$ whenever the functor
$H$ is clear from the context. Note that the abelian group $H(S)^{\CF}$ (resp.\ $H(S)^{\CF^\circ}$)
is exactly the inverse limit $\varprojlim_{P \in \CF} H(\cdot)$
(resp.\ $\varprojlim_{P \in \CF^\circ} H(\cdot)$). The functors
$H\colon \CF^{op}\to \Ab$ that we consider satisfy the following
condition:
given any closed subgroup $P$ of $S$ and any $x\in P$, then
$H(c_x)=1_{H(P)}\colon H(P)\to H(P)$ is the identity. This is the
case for the cohomology functors used in Theorems
\ref{T:StableElements} and \ref{T:SpectralSequence} below. The next
lemma is easy to prove. It shows, in particular, that if $\CF$ is
finitely generated, it is enough to consider a finite number of
conditions to determine the $\CF$-stable elements. 

\begin{lem}\label{L:stable_elements_orbit_functor_finitely_generated_F}
Let $\CF$ be a pro-fusion system on a pro-$p$ group $S$. Assume that
$\CF$ is generated by $\CM$. Let $H\colon \CF^{op}\to \Ab$ be a
functor such that $H(c_x)=1_{H(P)}$ for $c_x\in \Inn(P)$, $P\leq
S$. Then, 
\[
H(S)^\CF=\{x\in H(S)\text{ $|$ }H(\iota^S_Q\circ \varphi)(x) = H(\iota^S_P)(x)\text{ for all }\varphi\colon P\to Q, \varphi\in \CM\}.
\]
A similar statement holds for $\CF^\circ$-stable elements if $\CF^\circ$ is generated by $\CM^\circ$.
\end{lem}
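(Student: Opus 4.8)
The plan is to reformulate $\CF$-stability in a form that interacts well with composition and restriction, and then to propagate the stability condition from the generating set $\CM$ to all of $\CF$. Write $\res^S_P = H(\iota^S_P)$ as in the text, and call an arbitrary $\CF$-morphism $\psi\colon P \to Q$ (with $P, Q \le S$) \emph{$x$-stable} if $H(\psi)(\res^S_Q(x)) = \res^S_P(x)$; since $H$ is contravariant, this reads $H(\iota^S_Q \circ \psi)(x) = \res^S_P(x)$. Taking $Q = S$ recovers exactly the defining condition of Definition \ref{D:GeneralStablElements}, so $x \in H(S)^\CF$ if and only if every $\CF$-morphism into $S$ is $x$-stable; conversely, if $x \in H(S)^\CF$ then every $\CF$-morphism $\psi\colon P \to Q$ is $x$-stable, because $\iota^S_Q \circ \psi\colon P \to S$ lies in $\CF$. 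Hence $H(S)^\CF$ is precisely the set of $x$ for which all $\CF$-morphisms are $x$-stable, while the set $R$ on the right-hand side of the statement is the set of $x$ for which all morphisms in $\CM$ are $x$-stable.

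The inclusion $H(S)^\CF \subseteq R$ is then immediate, since each $\varphi \in \CM$ is in particular an $\CF$-morphism into $S$ after composing with $\iota^S_Q$. For the reverse inclusion I would fix $x \in R$ and show that the collection $\CG_x$ of $x$-stable morphisms is all of $\CF$. To this end I would verify three closure properties by direct diagram chases with $H$:
\begin{enumerate}
\item[(i)] inclusions $\iota^Q_P\colon P \to Q$ are always $x$-stable, since $\iota^S_Q \circ \iota^Q_P = \iota^S_P$ forces $H(\iota^Q_P)(\res^S_Q(x)) = \res^S_P(x)$;
\item[(ii)] $\CG_x$ is closed under composition: if $\psi\colon P \to Q$ and $\chi\colon Q \to T$ are $x$-stable, then $H(\chi\circ\psi)(\res^S_T(x)) = H(\psi)(H(\chi)(\res^S_T(x))) = H(\psi)(\res^S_Q(x)) = \res^S_P(x)$;
\item[(iii)] $\CG_x$ is closed under restriction: for $\psi\colon P\to Q$ and $P'\le P$, set $Q'=\psi(P')$; applying $H$ to the commutative square $\iota^Q_{Q'}\circ\psi|_{P'} = \psi\circ\iota^P_{P'}$ and evaluating at $\res^S_Q(x)$ turns the $x$-stability of $\psi$ into the identity $H(\psi|_{P'})(\res^S_{Q'}(x)) = \res^S_{P'}(x)$, i.e. the $x$-stability of $\psi|_{P'}$.
\end{enumerate}

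Finally I would check that $\CG_x$ contains a generating set: every $\varphi \in \CM$ is $x$-stable by the very definition of $R$, and every $c_s \in \Inn(S)$ is $x$-stable because the hypothesis $H(c_s) = 1_{H(S)}$ (the case $P=S$) gives $H(c_s)(x) = x = \res^S_S(x)$. By (iii) all restrictions of morphisms in $\CM \cup \Inn(S)$ lie in $\CG_x$, and by (ii) so do all finite composites of such restrictions; by Definition \ref{def:fin-gen} this exhausts $\CF$. Thus every $\CF$-morphism into $S$ is $x$-stable, so $x \in H(S)^\CF$, giving $R \subseteq H(S)^\CF$. The $\CF^\circ$-statement is obtained verbatim with $\CM^\circ$ in place of $\CM$, once one notes that restrictions and inner automorphisms keep us among the open subgroups, so $\CG_x$ restricted to open objects is closed under the same operations. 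The only genuinely delicate point is pinning down the correct formulation of $x$-stability for morphisms $\psi\colon P \to Q$ with $Q \ne S$ so that it survives restriction (item (iii)), which is exactly where contravariance and the inclusion identities must be combined correctly; the hypothesis $H(c_s)=1$ enters solely to dispose of the inner automorphisms in the generating set.
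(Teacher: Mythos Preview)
Your argument is correct. The paper does not actually supply a proof of this lemma; it simply declares it ``easy to prove'' and moves on. Your write-up is precisely the natural verification one would expect: reformulate stability for arbitrary $\CF$-morphisms $\psi\colon P\to Q$ via $H(\iota^S_Q\circ\psi)(x)=\res^S_P(x)$, observe that the class of $x$-stable morphisms contains the identities, is closed under composition and restriction, and contains $\CM\cup\Inn(S)$; then invoke Definition~\ref{def:fin-gen}. One minor remark: you only use the hypothesis $H(c_x)=1_{H(P)}$ in the special case $P=S$, which is indeed all that is needed here, so your proof in fact establishes the lemma under a slightly weaker assumption than stated.
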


For the continuous cohomology functor $H=H^*_c(\cdot)\colon \CF\to
\BF_p\operatorname{\sf{-Alg}}\subseteq \Ab$, note that $H^*_c(S)^{\CF}$ and
$H^*_c(S)^{\CF^\circ}$ are $\BF_p$-subalgebras of $H^*_c(S)$.
The classical stable elements theorem of Cartan and
Eilenberg {\cite[XII.10.1]{CartanEilenbergBook}} states that, for a
finite group $G$ with Sylow $p$-subgroup $S$, we have $H^*(G;\BF_p)
\cong H^*(S;\BF_p)^{\CF_S(G)}$. We next show that a similar statement holds for
profinite groups,  proving Theorem
\ref{T:introduction_Cartan_Eilenberg}. 

\begin{thm}[Stable Elements Theorem for Pro-Fusion Systems] \label{T:StableElements}
Let $\CF = \varprojlim_{i\in I} \CF_i$ be a pro-fusion system on a
pro-$p$ group $S$.
Suppose that $\CF^o$ is finitely generated, or that $\CF$ is
pro-saturated, or both.
Then
\[
	H^*_c(\CF;\BF_p)=H^*_c(S;\BF_p)^\CF  = H^*_c(S;\BF_p)^{\CF^\circ} \cong \varinjlim_{i\in I} H^*(S_i;\BF_p)^{\CF_i}.
\]
\end{thm}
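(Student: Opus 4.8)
The plan is to prove the two equalities and the isomorphism separately, establishing a chain of inclusions and then closing the loop.

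First I would address the isomorphism
\[
H^*_c(S)^{\CF^\circ} \cong \varinjlim_{i\in I} H^*(S_i)^{\CF_i}.
\]
The key structural fact is \eqref{equ:cohomology_is_inductive_limit}, which gives $H^*_c(S) \cong \varinjlim_i H^*(S_i)$, compatibly with the restriction maps $\res^{S}_{N_j}$ and the quotient maps $S_i \onto S_j$. The strategy is to interpret the subring of $\CF^\circ$-stable elements as an inverse limit over the poset of open subgroups, using the observation already recorded after Definition~\ref{D:GeneralStablElements} that $H^*_c(S)^{\CF^\circ} = \varprojlim_{P\in\CF^\circ} H^*_c(\cdot)$. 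Since each open $P\leq S$ contains some $N_i$ and satisfies $P = \varprojlim_i f_i(P)$, and since $\Hom_\CF(P,Q) = \varprojlim_i \Hom_{\CF_i}(f_i(P),f_i(Q))$ by Definition~\ref{D:pro-fusion system}, I would argue that an element $x = (x_i)_i \in \varinjlim_i H^*(S_i)$ is $\CF^\circ$-stable if and only if, for each $i$, the representative $x_i$ is $\CF_i$-stable, i.e.\ $x_i \in H^*(S_i)^{\CF_i}$. The forward direction uses that the canonical morphisms $(f_i,F_i)\colon \CF\to\CF_i$ of Example~\ref{example:canonicalmorphisms} send stable elements to stable elements; the reverse uses that every $\CF$-morphism between open subgroups is, after applying $f_i$, an $\CF_i$-morphism, together with the fact that finitely many stability conditions at level $i$ suffice by Lemma~\ref{L:stable_elements_orbit_functor_finitely_generated_F}. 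This identifies $H^*_c(S)^{\CF^\circ}$ with $\varinjlim_i H^*(S_i)^{\CF_i}$ as rings, the direct limit being filtered since $I$ is directed.

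Next I would prove the equality $H^*_c(S)^\CF = H^*_c(S)^{\CF^\circ}$. The inclusion $H^*_c(S)^\CF \subseteq H^*_c(S)^{\CF^\circ}$ is immediate, since $\CF^\circ$-stability imposes strictly fewer conditions (open subgroups form a subclass of closed subgroups). The reverse inclusion is where the two alternative hypotheses enter. If $\CF^\circ$ is finitely generated, then by Lemma~\ref{L:stable_elements_orbit_functor_finitely_generated_F} the $\CF$-stable elements are cut out by finitely many conditions coming from morphisms in $\CM^\circ$, all of which already involve open subgroups; I would then show that stability against all open morphisms forces stability against an arbitrary closed morphism $\varphi\colon P\to S$ by approximating $P = \varprojlim_i f_i(P)$ from above by the open subgroups $P N_i$ and using continuity of $H^*_c$ together with \eqref{equ:cohomology_is_inductive_limit}. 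Concretely, any $\CF$-morphism out of a closed $P$ factors through its images $f_i(P)$, which lift to $\CF^\circ$-morphisms of the open subgroups $PN_i$, so the restriction of a $\CF^\circ$-stable class already satisfies the closed stability condition. In the pro-saturated case, the same approximation works, with saturation of each $\CF_i$ guaranteeing that the extension/lifting of morphisms needed for the approximation is available at each finite level.

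The main obstacle, as I see it, is the reverse inclusion $H^*_c(S)^{\CF^\circ}\subseteq H^*_c(S)^\CF$: passing from stability against open subgroups to stability against all closed subgroups. The difficulty is that a closed subgroup $P$ need not be open, so its cohomology is a genuine inverse limit and an $\CF$-morphism $\varphi\colon P\to S$ need not be the restriction of a morphism of open subgroups in any obvious way. The resolution I expect to carry out is to realize $\varphi$ and $\iota^S_P$ as inverse limits of their finite-level analogues $\varphi_i$ and $\iota^{S_i}_{f_i(P)}$, apply the functor $H^*(-)$ at each level where the stability condition $H^*(\varphi_i)(x_i) = \res(x_i)$ holds because $x_i\in H^*(S_i)^{\CF_i}$, and then pass to the colimit; the compatibility of these conditions across the directed system, together with the fact that cohomology commutes with the relevant limits by \eqref{equ:cohomology_is_inductive_limit}, is precisely what makes the closed-subgroup condition follow from the open-subgroup condition. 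I would take care to check that this argument genuinely uses only one of the two stated hypotheses at a time, so that the theorem holds under either assumption.
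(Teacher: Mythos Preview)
Your overall architecture—a chain of inclusions among $H^*_c(S)^\CF$, $H^*_c(S)^{\CF^\circ}$, and $\varinjlim_i H^*(S_i)^{\CF_i}$—matches the paper's, but you have the difficult direction misidentified and its justification wrong. (A minor point first: elements of the direct limit are not tuples $(x_i)_i$; each is represented by some $x_i\in H^*(S_i)$ for a single $i$, and the question is whether the representative can be chosen in $H^*(S_i)^{\CF_i}$.) The canonical morphism $(f_i,F_i)\colon\CF\to\CF_i$ induces $f_i^*\colon H^*(S_i)\to H^*_c(S)$, so it carries $\CF_i$-stable elements to $\CF$-stable elements, not the reverse; this is the \emph{easy} inclusion and requires no hypothesis. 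The hard step is the opposite one: given $x\in H^*_c(S)^{\CF^\circ}$ with $x=f_i^*(x_i)$, show that after enlarging $i$ to some $j$ the representative $x_j=f_{ij}^*(x_i)$ lies in $H^*(S_j)^{\CF_j}$. This is where the hypothesis (finitely generated or pro-saturated) is consumed, and it needs a nontrivial input you do not mention: one must know that $\CF_j$-stability is implied by stability against the morphisms $F_j(\psi)$ with $\psi\in\CF^\circ$. The paper arranges this by replacing the $\CF_i$ with $\langle F_i(\CF)\rangle$ via \cite[Lemma~3.3]{StancuSymonds2014} in the finitely generated case, or with $\CF/N_i$ via \cite[Propositions~4.4, 4.5]{StancuSymonds2014} in the pro-saturated case, so that $F_i|_{\CF^\circ}$ is essentially surjective on morphisms.

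Your direct attack on $H^*_c(S)^{\CF^\circ}\subseteq H^*_c(S)^\CF$ also has a gap: an $\CF$-morphism $\varphi\colon P\to S$ on a closed subgroup need not extend to an $\CF^\circ$-morphism on the open overgroup $PN_i$, so the claimed ``lift'' does not exist in general. The paper avoids this by arguing circularly: it shows directly that the image of $\varinjlim_i H^*(S_i)^{\CF_i}$ lands in $H^*_c(S)^\CF$ (for closed $P$ and $\psi\in\CF$, use the square with $F_i(\psi)\in\CF_i$; no extension to an open subgroup is needed), and then closes the loop with the hard inclusion above. Your final paragraph does sketch this correct diagram-chase, but by then you are assuming $x_i\in H^*(S_i)^{\CF_i}$, which is precisely the step you have not justified.
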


\begin{rmk}\label{rmk:stableelements_versions}
The conclusion of Theorem \ref{T:StableElements} using the
assumption that $\CF$ is pro-saturated was suggested by Peter Symonds
\cite{Symonds2020}.

Observe also that by Lemma~\ref{T:F f.g. implies Fopen f.g.}, the
above theorem holds under the assumption that $\CF$ is finitely
generated. 
\end{rmk}

\begin{proof}
We have an isomorphism
\[
	H^*_c(S;\BF_p) \cong \varinjlim_{i\in I} H^*(S_i;\BF_p),
\]
which is induced by the maps $\varphi_i^*\colon H^*(S_i;\BF_p) \to
H^*(S;\BF_p)$. Since $I$ is directed, every element of
$\varinjlim_{i\in I} H^*(S_i;\BF_p)$ can be represented by some $x_i
\in H^*(S_i;\BF_p)$ and two elements $x_i \in H^*(S_i;\BF_p)$ and $x_j
\in H^*(S_j;\BF_p)$ represent the same element in $\varinjlim_{i\in I}
H^*(S_i;\BF_p)$ if and only if there exists some $k \in I$ with $k
\geq i$, $k\geq j$ such that $\varphi_{ik}^*(x_i) =
\varphi_{jk}^*(x_j)$. 

First we check that the direct system $\{ H^*(S_i;\BF_p),
\varphi_{ij}^* \}$ restricts to the subsystem $\{
H^*(S_i;\BF_p)^{\CF_i}, \varphi_{ij}^* \}$; that is, if $j\geq i$,
then $\varphi_{ij}^*\colon H^*(S_i;\BF_p) \to H^*(S_j;\BF_p)$ sends
$H^*(S_i;\BF_p)^{\CF_i}$ into $H^*(S_j;\BF_p)^{\CF_j}$. To see this,
let $x_i \in H^*(S_i;\BF_p)^{\CF_i}$. We want to show that
$\varphi_{ij}^*(x_i) \in H^*(S_j;\BF_p)^{\CF_j}$. Let $\psi_j\colon
P_j \to S_j$ be a morphism in $\CF_j$. Then the commutative diagrams 
\[
\xymatrix{
	S_j \ar[r]^{\varphi_{ij}} & S_i && S_j \ar[r]^{\varphi_{ij}} & S_i \\
	P_j \ar[r]^{\varphi_{ij}|_{P_j}} \ar[u]^{\psi_j} & P_i \ar[u]_{\psi_i} && P_j \ar[r]^{\varphi_{ij}|_{P_j}} \ar[u]^{\iota_{P_j}^{S_j}} & P_i \ar[u]_{\iota_{P_i}^{S_i}} \\
}
\]
(where $P_i := \varphi_{ij}(P_j)$, $\psi_i:= F_{ij}(\psi_j)$) induce commutative diagrams
\[
\xymatrix{
	H^*(S_j;\BF_p) \ar[d]_{\psi_j^*} & H^*(S_i;\BF_p) \ar[l]_{\varphi_{ij}^*}\ar[d]^{\psi_i^*} && H^*(S_j;\BF_p) \ar[d]_{\res^{S_j}_{P_j}} & H^*(S_i;\BF_p) \ar[l]_{\varphi_{ij}^*}\ar[d]^{\res^{S_i}_{P_i}}\\
	H^*(P_j;\BF_p) & H^*(P_i;\BF_p) \ar[l]_{(\varphi_{ij}|_{P_j})^*} && H^*(P_j;\BF_p) & H^*(P_i;\BF_p) \ar[l]_{(\varphi_{ij}|_{P_j})^*} \\
}
\]
Since $x_i \in H^*(S_i;\BF_p)^{\CF_i}$, the two images of $x_i$ in
$H^*(P_i;\BF_p)$ coincide. Thus the two images of
$\varphi_{ij}^*(x_i)$ in $H^*(P_j;\BF_p)$ coincide too. This shows that $\varphi_{ij}^*(x_i)\in H^*(S_j;\BF_p)^{\CF_j}$, as desired. Thus it makes sense to consider $\varinjlim_{i\in I} H^*(S_i;\BF_p)^{\CF_i}$ as a subspace of $\varinjlim_{i\in I} H^*(S_i;\BF_p)$.

Now we check that the image of $\varinjlim_{i\in I} H^*(S_i;\BF_p)^{\CF_i}$ in $H^*_c(S;\BF_p)$ is contained in $H^*_c(S;\BF_p)^{\CF}$. Let $x_i \in H^*(S_i;\BF_p)^{\CF_i}$, representing an element of $\varinjlim_{i\in I} H^*(S_i;\BF_p)^{\CF_i}$. If $P$ is a closed subgroup of $S$ and $\psi\colon P \to S$ is an $\CF$-morphism, then the commutative diagrams
\[
\xymatrix{
	S \ar[r]^{\varphi_{i}} & S_i && S \ar[r]^{\varphi_{i}} & S_i \\
	P \ar[r]^{\varphi_{i}|_{P}} \ar[u]^{\psi} & P_i \ar[u]_{\psi_i} && P \ar[r]^{\varphi_{i}|_{P}} \ar[u]^{\iota_{P}^{S}} & P_i \ar[u]_{\iota_{P_i}^{S_i}} \\
}
\]
(where $P_i := \varphi_{i}(P)$, $\psi_i:= F_{i}(\psi)$) induce commutative diagrams
\[
\xymatrix{
	H^*_c(S;\BF_p) \ar[d]_{\psi^*} & H^*(S_i;\BF_p) \ar[l]_{\varphi_i^*}\ar[d]^{\psi_i^*} && H^*_c(S;\BF_p) \ar[d]_{\res^{S}_{P}} & H^*(S_i;\BF_p) \ar[l]_{\varphi_i^*}\ar[d]^{\res^{S_i}_{P_i}}\\
	H^*_c(P;\BF_p) & H^*(P_i;\BF_p) \ar[l]_{(\varphi_i|_{P})^*} && H^*_c(P;\BF_p) & H^*(P_i;\BF_p) \ar[l]_{(\varphi_i|_{P})^*} \\
}
\]
Since $x_i \in H^*(S_i;\BF_p)^{\CF_i}$, the two images of $x_i$ in
$H^*(P_i;\BF_p)$ coincide. Thus the two images of $\varphi_i^*(x_i)$
in $H^*_c(P;\BF_p)$ also coincide. This shows that the image of
$\varinjlim_{i\in I} H^*(S_i;\BF_p)^{\CF_i}$ in $H^*_c(S;\BF_p)$ is
contained in $H^*_c(S;\BF_p)^{\CF}$. Clearly $H^*_c(S;\BF_p)^{\CF}$ is
contained in $H^*_c(S;\BF_p)^{\CF^\circ}$. Thus it remains to show
that $H^*_c(S;\BF_p)^{\CF^\circ}$ is contained in the image of
$\varinjlim_{i\in I} H^*(S_i;\BF_p)^{\CF_i}$.

Suppose first that $\CF^o$ is finitely generated. By \cite[Lemma 3.3]{StancuSymonds2014}, we may assume that 
$\CF_i=\langle F_i(\CF)\rangle$ for all $i\in I$ (see also 
\cite[Proof of Proposition 3.7]{StancuSymonds2014}). In particular, to determine the stable subring it is enough to consider $F_i(\CF)$, i.e., $H^*(S_i;\BF_p)^{\CF_i}=H^*(S_i;\BF_p)^{F_i(\CF)}$ for all $i\in I$. Now, assume that that $x\in H^*_c(S;\BF_p)^{\CF^\circ}$.
Let $P$ be an open subgroup of $S$ and let $\psi\colon P \to S$ be an
$\CF^\circ$-morphism.
Since $I$ is directed, there is some $i\in I$ and
$x_i\in H^*(S_i;\BF_p)$ such that $x = \varphi_i^*(x_i)$.
Since $\psi^*(x) = \res^S_P(x)$, the last two previous diagrams show
that $\psi_i^*(x_i)$ and $\res^{S_i}_{P_i}(x_i)$ have the same image
under $(\varphi_i|_P)^*$.
Since $H^*_c(P;\BF_p) \cong \varinjlim_{i\in I} H^*(P_i;\BF_p)$ with
the isomorphism induced by the $(\varphi_i|_P)^*$, the elements
$\psi_i^*(x_i)$ and $\res^{S_i}_{P_i}(x_i)$ in $H^*(P_i;\BF_p)$ have
the same image under $(\varphi_{ij}|_P)^*$ for some $j\geq i$.
Thus by replacing $x_i$ by $\varphi_{ij}^*(x_i)$ we may assume that
$\psi_i^*(x_i) = \res^{S_i}_{P_i}(x_i)$. We can do this for the finite
number of generators $\psi$ of $\CF^\circ$. Since $I$ is directed,
there exist $i\in I$ and $x_i\in H^*(S_i;\BF_p)$ such that
$x =\varphi_i^*(x_i)$ and $\psi_i^*(x_i) = \res^{S_i}_{P_i}(x_i)$ for
all $\psi$.
Since $H^*(S_i;\BF_p)^{\CF_i}=H^*(S_i;\BF_p)^{F_i(\CF)}$, it follows that $x_i \in H^*(S_i;\BF_p)^{\CF_i}$, and hence $x$ is in the
image of $\varinjlim_{i\in I} H^*(S_i;\BF_p)^{\CF_i}$, finishing the proof
in the case when $\CF^o$ is finitely generated.

Finally, suppose that $\CF$ is pro-saturated. 
By \cite[Proposition 4.5]{StancuSymonds2014}, we may assume that
$\CF_i = \CF/N_i$ and hence the functors $F_i\colon \CF \to \CF_i$ and
$F_{ij}\colon \CF_j \to \CF_i$ are all surjective on objects and
morphisms.
Moreover, by \cite[Proposition 4.4]{StancuSymonds2014}, the
restriction of $F_i$ to the full subcategory $\CF^o$ on the open
subgroups, $F_i\colon \CF^{o} \to \CF_i$, is also surjective on
objects and morphisms.  
Suppose that $x\in H^*_c(S;\BF_p)^{\CF^\circ}$. Since $I$ is directed,
there is some $i\in I$ and $x_i\in H^*(S_i;\BF_p)$ such that
$x = \varphi_i^*(x_i)$.
Now, each morphism $\psi_i\in \Hom_{\CF_i}(P_i,S_i)$ arises from some
morphism $\psi\in \Hom_{\CF^o}(P,S)$ with
\begin{equation}\label{equ:chooselargestpossiblesubgroup}
N_i\subseteq P.
\end{equation} Since $\psi^*(x) = \res^S_P(x)$, the last two previous
diagrams show that $\psi_i^*(x_i)$ and $\res^{S_i}_{P_i}(x_i)$ have
the same image under $(\varphi_i|_P)^*$. Then
\begin{equation}\label{equ:elementinH(P_j)iszero}
(\varphi_{ij}|_{P_j})^*(\psi_i^*(x_i)-\res^{S_i}_{P_i}(x_i))=\psi_j^*(x_j)-\res^{S_j}_{P_j}(x_j)=0\in H^*(P_j;\BF_p)
\end{equation}
for some $j\geq i$, where $x_j=\varphi_{ij}^*(x_i)$. Repeating this
argument for the finitely many morphisms of $\CF_i$, we find $k\geq i$
such that equation \eqref{equ:elementinH(P_j)iszero} holds for $j=k$
and for every morphism $\psi_i\in \CF_i$ and, in addition, for each
$\psi_i\in \CF_i$ we have chosen $P$ satisfying
\eqref{equ:chooselargestpossiblesubgroup}.
To finish, we check that $x_k \in H^*(S_k;\BF_p)^{\CF_k}$. Any
morphism $\chi_k\in \Hom_{\CF_k}(Q_k,S)$  arises from some morphism
$\chi\in \Hom_{\CF}(Q,S)$. Then $\chi_i\in \CF_i$ and, from the choice
of $P$ for $\chi_i$ satisfying
\eqref{equ:chooselargestpossiblesubgroup}, we have that $Q_k\subseteq
P_k$. From this and Equation \eqref{equ:elementinH(P_j)iszero} applied
to $j=k$ and $\chi_i\in \CF_i$, we find that
$\psi_k^*(x_k)-\res^{S_k}_{P_k}(x_k)=0$. 
\end{proof}

The next result proves Theorem \ref{T:introduction_cohomology_profinitegroup}.

\begin{cor}\label{cor:cohomology_of_profinite_group_with_open_Sylow}
Let $G$ be a profinite group with Sylow pro-$p$ subgroup $S$. Then we have
\[
		H^*_c(G;\BF_p) \cong H^*_c(S;\BF_p)^{\CF_S(G)}  = H^*_c(S;\BF_p)^{\CF_S(G)^\circ}. 
\]
\end{cor}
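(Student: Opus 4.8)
The plan is to deduce the corollary from Theorem~\ref{T:StableElements} by expressing both $G$ and its induced pro-fusion system $\CF_S(G)$ as compatible inverse limits over a common directed index set, and then matching the stable-elements description level by level with the classical finite Cartan--Eilenberg theorem. First I would write $G = \varprojlim_{i\in I} G_i$ as an inverse limit of finite groups, where each $G_i = G/K_i$ for a directed family of open normal subgroups $K_i$; since $S$ is a Sylow pro-$p$ subgroup of $G$, the image $S_i := f_i(S)$ is a Sylow $p$-subgroup of the finite group $G_i$, and $S = \varprojlim_{i\in I} S_i$. Each finite group $G_i$ induces on its Sylow $p$-subgroup $S_i$ the ordinary (saturated) fusion system $\CF_{S_i}(G_i)$, and these assemble into an inverse system of fusion systems whose limit is exactly $\CF_S(G)$; this identification is the content of the construction in Definition~\ref{D:pro-fusion system} together with \cite[Example 2.8, Example 2.18]{StancuSymonds2014}, which also tells us that $\CF_S(G)$ is pro-saturated (indeed saturated). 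Thus Theorem~\ref{T:StableElements} applies with $\CF = \CF_S(G) = \varprojlim_{i\in I} \CF_{S_i}(G_i)$, giving
\[
H^*_c(S;\BF_p)^{\CF_S(G)} = H^*_c(S;\BF_p)^{\CF_S(G)^\circ} \cong \varinjlim_{i\in I} H^*(S_i;\BF_p)^{\CF_{S_i}(G_i)}.
\]

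Next I would invoke the classical stable elements theorem of Cartan--Eilenberg \cite[XII.10.1]{CartanEilenbergBook} at each finite level: since $S_i$ is a Sylow $p$-subgroup of the finite group $G_i$, we have a ring isomorphism $H^*(G_i;\BF_p) \cong H^*(S_i;\BF_p)^{\CF_{S_i}(G_i)}$. These isomorphisms are induced by the restriction maps $H^*(G_i) \to H^*(S_i)$, which are natural with respect to the structure maps $G_j \onto G_i$ of the inverse system; consequently they are compatible with the transition maps $\varphi_{ij}^*$ of the direct system on the right-hand side. Passing to the colimit over the directed set $I$ therefore yields
\[
\varinjlim_{i\in I} H^*(G_i;\BF_p) \cong \varinjlim_{i\in I} H^*(S_i;\BF_p)^{\CF_{S_i}(G_i)}
\]
as a ring isomorphism. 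Finally, by Equation~\eqref{equ:cohomology_is_inductive_limit} applied to $G = \varprojlim_{i\in I} G_i$, the left-hand side is $H^*_c(G;\BF_p)$, and chaining the isomorphisms gives the claim.

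The main point requiring care is the compatibility (naturality) of the level-wise Cartan--Eilenberg isomorphisms with the transition maps, so that they glue into a colimit isomorphism rather than merely a family of isomorphisms. This reduces to checking that for $j \geq i$ the square relating $\res^{G_j}_{S_j}$, $\res^{G_i}_{S_i}$ and the inflation/restriction maps induced by $G_j \onto G_i$ commutes; this is standard functoriality of cohomology restriction along a quotient map that carries $S_j$ onto $S_i$, but it is where the identification of $\CF_S(G)$ with the inverse limit of the $\CF_{S_i}(G_i)$ must be used to ensure the stable-element subrings correspond correctly. The equality $H^*_c(S;\BF_p)^{\CF_S(G)} = H^*_c(S;\BF_p)^{\CF_S(G)^\circ}$ is immediate from Theorem~\ref{T:StableElements}, since $\CF_S(G)$ is pro-saturated and the theorem delivers both descriptions simultaneously. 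I do not expect the colimit and naturality bookkeeping to present genuine difficulty, only routine diagram-chasing, so the corollary follows directly once the inverse-limit presentation is set up.
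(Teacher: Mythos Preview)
Your proposal is correct and follows essentially the same approach as the paper: write $G$ and $\CF_S(G)$ as compatible inverse limits, apply the classical Cartan--Eilenberg theorem at each finite level to identify $H^*(G_i;\BF_p)$ with $H^*(S_i;\BF_p)^{\CF_{S_i}(G_i)}$, and then invoke Theorem~\ref{T:StableElements} (using that $\CF_S(G)$ is pro-saturated) together with Equation~\eqref{equ:cohomology_is_inductive_limit} to pass to the colimit. The paper's proof is terser and omits the naturality discussion you include, but the argument is the same.
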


\begin{proof}
Write $G = \varprojlim_{i\in I} G_i$ and $S= \varprojlim_{i\in I} S_i$. Then we have
\[
	H^*_c(G;\BF_p) \cong \varinjlim_{i\in I} H^*(G_i;\BF_p) \cong \varinjlim_{i\in I} H^*(S_i;\BF_p)^{\CF_{S_i}(G_i)},
\]
where the second isomorphism is due to the stable elements theorem for finite groups. Thus, as $\CF_S(G)$ is pro-saturated, by Theorem~\ref{T:StableElements} it follows that
\[
	\varinjlim_{i\in I} H^*(S_i;\BF_p)^{\CF_{S_i}(G_i)} \cong H^*_c(S;\BF_p)^{\CF_S(G)}  = H^*_c(S;\BF_p)^{\CF_S(G)^\circ},
\]
finishing the proof.
\end{proof}

We give an easy condition on a profinite group that ensures that the
pro-fusion system that it induces on a Sylow pro-$p$ subgroup is
finitely generated, and hence stable elements are determined by a finite number of conditions.

\begin{lem} \label{T:S open implies F_S(G) f.g.}
Let $G$ be a profinite group with an open Sylow pro-$p$ subgroup $S$. Then $\CF_S(G)$ is finitely generated.
\end{lem}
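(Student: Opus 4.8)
The plan is to exhibit an explicit finite generating set $\CM$ for $\CF_S(G)$ coming from a coset transversal of $S$ in $G$. First I would record that, since $S$ is open in the profinite (hence compact) group $G$, its index $n=[G:S]$ is finite: the left cosets form a disjoint open cover of $G$, so compactness forces only finitely many of them. Fix a transversal $g_1=1,g_2,\dots,g_n$ with $G=\bigsqcup_{j=1}^n g_j S$. For each $j$ set $P_j=S\cap{}^{g_j^{-1}}S$ and $Q_j=S\cap{}^{g_j}S$; these are closed subgroups of $S$, and $c_{g_j}$ restricts to an isomorphism $\phi_j\colon P_j\to Q_j$ in $\CF_S(G)$, namely the largest restriction of $c_{g_j}$ whose domain and image both lie in $S$. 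I then take $\CM=\{\phi_1,\dots,\phi_n\}$, a finite set of $\CF_S(G)$-morphisms, and claim it generates $\CF_S(G)$.

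To verify generation, recall that every morphism of $\CF_S(G)$ is a conjugation map $c_g|_P\colon P\to{}^gP$ with $P,{}^gP\leq S$ (possibly post-composed with an inclusion into a larger codomain, which is a restriction of $\id_S=c_1\in\Inn(S)$). Given such a morphism, write $g=g_j s$ with $s\in S$ for the appropriate $j$, and decompose $c_g=c_{g_j}\circ c_s$. Here $c_s|_P$ is a restriction of the inner automorphism $c_s\in\Inn(S)$ and carries $P$ into ${}^sP\leq S$; then $c_{g_j}$ restricted to ${}^sP$ lands in ${}^gP\leq S$, so ${}^sP\leq P_j$ and $c_{g_j}|_{{}^sP}$ is a restriction of $\phi_j$. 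Thus $c_g|_P=(c_{g_j}|_{{}^sP})\circ(c_s|_P)$ exhibits an arbitrary morphism of $\CF_S(G)$ as a composite of restrictions of morphisms in $\CM\cup\Inn(S)$, and hence $\CF_S(G)$ is finitely generated in the sense of Definition~\ref{def:fin-gen}.

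The crux, and the only place where care is genuinely required, is the choice of side in the factorization $g=g_j s$. The main obstacle is that the naive factorization $g=s'g_j$ with $s'\in S$ would give $c_g=c_{s'}\circ c_{g_j}$, forcing $c_{g_j}$ to act first and producing an intermediate subgroup ${}^{g_j}P$ that need not be contained in $S$; such a factorization leaves the fusion system and cannot be read off from $\CM\cup\Inn(S)$. Placing the $S$-part on the right, so that the inner automorphism acts first, keeps every intermediate subgroup inside $S$ and makes the conjugation-by-$g_j$ step a bona fide restriction of $\phi_j$. The remaining points, that $P_j$ and $Q_j$ are closed and that the codomain-enlarging inclusions are restrictions of $\id_S$, are routine and I would not belabor them.
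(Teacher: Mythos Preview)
Your proof is correct and essentially identical to the paper's: both take a finite coset transversal of $S$ in $G$ and use the maximal conjugation maps $c_t\colon S\cap S^t\to S\cap{}^tS$ as generators. The only cosmetic difference is that the paper uses right cosets and factors $g=st$ (so $c_t$ acts first and $c_s\in\Inn(S)$ second), whereas you use left cosets and factor $g=g_js$ (so $c_s\in\Inn(S)$ acts first and $c_{g_j}$ second); your final paragraph explaining why the side matters is accurate and applies equally to the paper's choice.
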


\begin{proof}
Let $T$ be a (finite) set of representatives of the right cosets of
$S$ in $G$. We claim that the finite set
$\{ c_t\colon S\cap S^t \to S\cap \ls tS\mid t\in T \}$ generates $\CF_S(G)$. To see
this, suppose that $c_g \colon P \to Q$ is an $\CF_S(G)$-morphism
given by some $g\in G$.
There exist $t\in T$ and $s\in S$ such that $g= st$. So $c_g=c_sc_t$,
where $c_t\colon P \to\ls tP$ is a
restriction of $c_t\colon S\cap S^t \to S\cap\ls tS$ and $c_s\colon
\ls tP \to Q$ is a restriction of $c_s\in\Inn(S)$. This proves the
lemma. 
\end{proof}

Corollary \ref{cor:cohomology_of_profinite_group_with_open_Sylow} for
a profinite group with an open Sylow pro-$p$ subgroup can also be
proven directly using the restriction and transfer maps for cohomology
of profinite groups and their open subgroups (see \cite[\S
  6.7]{RibesZalesskii2010}), in exactly the same way as the stable
elements theorem is proven for finite groups.

We end this section with the construction of a spectral sequence
involving a 
strongly closed subgroup of a pro-saturated fusion system on a pro-$p$
group. The idea is based on the spectral sequence for a saturated fusion
system on a finite $p$-group \cite{Diaz2014}. The next result proves Theorem
\ref{T:introduction_SpectralSequence}. 

\begin{thm}\label{T:SpectralSequence}
Let $\CF$ be a pro-saturated fusion system on a pro-$p$ group $S$ and
let $T\leq S$ be a strongly $\CF$-closed subgroup. Then there is a first quadrant
cohomological spectral sequence with second page 
\[
	E_2^{n,m}= H_c^n(S/T;H_c^m(T;\BF_p))^\CF\cong H_c^n(S/T;H_c^m(T;\BF_p))^{\CF^o}
\]
and converging to $H_c^*(\CF;\BF_p)= H^*_c(S;\BF_p)^\CF= H^*_c(S;\BF_p)^{\CF^o}.$
\end{thm}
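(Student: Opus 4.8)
The plan is to realize the required spectral sequence as a filtered colimit over $i\in I$ of the finite spectral sequences of \cite{Diaz2014} attached to the saturated systems $\CF_i$. First I would fix the finite-level data. Using pro-saturation, by \cite[Proposition 4.5]{StancuSymonds2014} I may assume $\CF_i=\CF/N_i$, so that the functors $F_i\colon\CF\to\CF_i$ and $F_{ij}\colon\CF_j\to\CF_i$ are surjective on objects and morphisms. Put $T_i=f_i(T)$; as $T$ and each $N_i$ are strongly $\CF$-closed, so is $TN_i$, whence $T_i=TN_i/N_i$ is strongly $\CF_i$-closed in $S_i$, and moreover $S/T=\varprojlim_i S_i/T_i$ and $H^m_c(T)\cong\varinjlim_i H^m(T_i)$. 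For each $i$, the finite-case construction then supplies a first quadrant spectral sequence
\[
E_2^{n,m}(i)=H^n(S_i/T_i;H^m(T_i))^{\CF_i}\Rightarrow H^{n+m}(S_i)^{\CF_i}.
\]

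Next I would assemble these into a direct system. For $j\geq i$ the surjection $F_{ij}\colon\CF_j\to\CF_i$ carries the pair $(\CF_j,T_j)$ to $(\CF_i,T_i)$, and by naturality of the finite-case construction it induces a morphism of spectral sequences $E_r(i)\to E_r(j)$, these being compatible with composition. Since filtered colimits of abelian groups are exact, they commute with the formation of homology and hence with the passage from each page to the next; therefore $E_r^{n,m}:=\varinjlim_i E_r^{n,m}(i)$ is again a first quadrant cohomological spectral sequence, with second page $\varinjlim_i E_2^{n,m}(i)$ and abutment $\varinjlim_i H^*(S_i)^{\CF_i}$. Convergence survives the colimit because in each total degree only finitely many terms contribute, so the colimit of the finite convergent filtrations converges to the colimit of the abutments.

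It remains to identify the two ends of this limiting spectral sequence. For the abutment, Theorem~\ref{T:StableElements} gives $\varinjlim_i H^*(S_i)^{\CF_i}\cong H^*_c(S)^\CF=H^*_c(S)^{\CF^\circ}$. For the second page I would first set stability aside: since $H^m_c(T)=\varinjlim_i H^m(T_i)$ is a discrete $S/T$-module on which $S/T$ acts through the finite quotients $S_i/T_i$, the colimit formula for continuous cohomology with discrete coefficients, together with a cofinality argument, yields $\varinjlim_i H^n(S_i/T_i;H^m(T_i))\cong H^n_c(S/T;H^m_c(T))$. To reinstate $\CF$-stability I would re-run the directed-colimit argument of the proof of Theorem~\ref{T:StableElements}, now applied to the functor on $\CF$ underlying the $E_2$-page instead of to $H^*_c(\cdot)$: using that the $F_i$ remain surjective after restriction to $\CF^\circ$ by \cite[Proposition 4.4]{StancuSymonds2014} and that at each level only finitely many stability conditions occur, one obtains $\varinjlim_i H^n(S_i/T_i;H^m(T_i))^{\CF_i}\cong H^n_c(S/T;H^m_c(T))^\CF$, and likewise with $\CF^\circ$ in place of $\CF$.

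The main obstacle is precisely this last identification of the second page, since it forces one to interchange the inverse limit implicit in passing to $\CF$-stable elements with the direct limit over $I$, and such limits do not commute in general. This is the same delicate interchange that pro-saturation is invoked to control in Theorem~\ref{T:StableElements}, so the crux is to abstract that argument so as to apply to the $E_2$-page functor rather than to ordinary cohomology. A secondary but genuine point, which I would need to check against \cite{Diaz2014}, is that the finite-case spectral sequence is natural enough in $(\CF_i,T_i)$---with its differentials and filtrations---for the direct system of spectral sequences to be well defined.
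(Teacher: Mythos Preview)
Your proposal is correct and follows essentially the same route as the paper: set up the finite-level spectral sequences of \cite{Diaz2014} for the strongly $\CF_i$-closed $T_i=TN_i/N_i$, pass to the filtered colimit over $I$ using exactness of directed colimits to propagate through the pages, and identify both the $E_2$-page and the abutment with the $\CF$-stable (equivalently $\CF^\circ$-stable) objects by re-running the pro-saturated argument of Theorem~\ref{T:StableElements} for the relevant functor. The paper differs only in emphasis: it works with the functor $P\mapsto E_k^{*,*}(P)$ on all of $\CF$ (not just at $S$), thereby exhibiting $\{E_k^{*,*}(S)^\CF,d_k\}$ explicitly as a sub-spectral sequence of the ambient LHS spectral sequence for $T\trianglelefteq S$, and it handles convergence by appealing to the filtration argument in \cite[Proof of Theorem 4.1]{Diaz2014} rather than your first-quadrant finiteness remark.
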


\begin{proof}
The $\CF$-stable elements of the second page in the statement are taken with respect to the functor $\CF\to \Ab$ that sends each subgroup $P\leq S$ to the second page 
\[
H_c^n(P/P\cap T;H_c^m(P\cap T;\BF_p))
\]
of the Lyndon-Hochshild-Serre spectral sequence
$\{E_k^{*,*}(P),d_k\}_{k\geq 2}$ associated to the following short
exact sequence of profinite groups (see
Equation~\eqref{equ:LHSss_profinite}):
\[
P\cap T\to P\to P/P\cap T\cong PT/P.
\]

Fix $i\in I$ and write $S_i=S/N_i$. Then $T_i=TN_i/N_i$ is a strongly $\CF_i$-closed subgroup of $S_i$ and, according to \cite[Theorem 1.1]{Diaz2014}, there is a first quadrant spectral sequence with second page
\[
	E_2^{n,m}= H^n(S_i/T_i;H^m(T_i;\BF_p))^{\CF_i}
\]
and converging to $H(S_i;\BF_p)^{\CF_i}$. Each subgroup $P_i\leq S_i$
is of the form $P_i=P/N_i$ for $N_i\unlhd P\leq S$ and the
$\CF_i$-stable elements in the last display are taken with respect to
the functor that sends $P_i$ to the second page, 
\[
	H^n(P_i/P_i\cap T_i;H^m(P_i\cap T_i;\BF_p)),
\]
of the Lyndon-Hochshild-Serre spectral sequence $\{E_k^{*,*}(P_i),d_k\}_{k\geq 2}$ associated to the following short exact sequence of finite groups,
\[
P_i\cap T_i\to P_i\to P_i/P_i\cap T_i\cong P_iT_i/P_i.
\]

Now, for each $P\leq S$, we have $P\cap T=\varprojlim_{i\in I} P_i\cap
T_i$ and, by \cite[Theorem 1.2.5(a)]{{Wilson1998}}, $P/P\cap
T=\varprojlim_{i\in I} P_i/P_i\cap T_i$. For fixed non-negative
integers $n$ and $m$, the collection $\{H^m(P_i\cap T_i;\BF_p)\}_{i\in
  I}$ is a direct system of discrete abelian groups and hence, by
\cite[Theorem 9.7.2(e)]{Wilson1998}, we have a ring isomorphism
\[
H_c^n(P/P\cap T;H_c^m(P\cap T;\BF_p))\cong\varinjlim_{i\in I} H^n(P_i/P_i\cap T_i;H^m(P_i\cap T_i;\BF_p)).
\]
In other words, using the above notation,
\begin{equation}\label{E:E_2isdirectlimit}
E_2^{*,*}(P)=\varinjlim_{i\in I} E_2^{*,*}(P_i).
\end{equation}
By induction on $k\geq 2$, we obtain ring isomorphisms
\begin{align*}
E_{k+1}^{*,*}(P)=&H^*(E_{k}^{*,*}(P),d_k)\cong H^*(\varinjlim_{i\in I} E_k^{*,*}(P_i),d_k)\cong\\
\cong&\varinjlim_{i\in I} H^*(E_k^{*,*}(P_i),d_k)\cong\varinjlim_{i\in I} E_{k+1}^{*,*}(P_i),
\end{align*}
because the direct limit over a directed set is an exact functor, 
it commutes with the cohomology functor.
Now, the functoriality of short exact sequences, the functoriality
of the Lyndon-Hochshild-Serre spectral sequence on short exact
sequences, and the arguments in the proof of Theorem \ref{T:StableElements}, prove that
\[
{E_k^{*,*}(S)}^\CF={E_k^{*,*}(S)}^{\CF^o}\cong\varinjlim_{i\in I} {E_k^{*,*}(S_i)}^{\CF_i},
\]
for all $k\geq 2$. In particular, as ${E_{k+1}^{*,*}(S_i)}^{\CF_i}=H^*(E_k^{*,*}(S_i)^{\CF_i},d_k)$ for each $i$, we have:
\begin{align*}
E_{k+1}^{*,*}(S)^\CF=&\varinjlim_{i\in I} {E_{k+1}^{*,*}(S_i)}^{\CF_i}=\varinjlim_{i\in I} {H^*(E_k^{*,*}(S_i)^{\CF_i},d_k)}\cong\\
\cong& H^*(\varinjlim_{i\in I} E_k^{*,*}(S_i)^{\CF_i},d_k)=H^*({E_k^{*,*}(S)}^\CF,d_k), 
\end{align*}
where we have used again that direct limits over direct sets are exact
functors. Hence, $\{{E_k^{*,*}(S)}^\CF,d_k\}_{k\geq 2}$ is a
sub-spectral sequence of $\{E_k^{*,*}(S),d_k\}_{k\geq 2}$. The fact
that it converges to
$H_c(S;\BF_p)^\CF$ is proven by considering filtrations as
in \cite[Proof of Theorem 4.1]{Diaz2014}. 
\end{proof}

In the above theorem, if $\CF=\CF_S(G)$ for a profinite group $G$ with
a Sylow pro-$p$ subgroup $S$ and $T\unlhd G$, then the spectral
sequence coincides with the LHS s.s. \eqref{equ:LHSss_profinite}.


\section{Mod-$p$ cohomology of $\GL_2(\BZ_p)$}
\label{section:coho_GL2}

Let $G$ be a profinite group with a Sylow pro-$p$ subgroup $S$ 
that contains a strongly $\CF_S(G)$-closed subgroup.  
Then the spectral sequence described in Theorem \ref{T:SpectralSequence}
can be used to compute the mod-$p$ cohomology ring structure of
$G$. The computations are
easier when $\CF_S(G)$ is a finitely generated pro-fusion system, in particular when $S$ is an open subgroup of $G$ by Lemma \ref{T:S open implies F_S(G) f.g.}.
A widely studied class of profinite groups with open Sylow
$p$-subgroups is the class of {\em compact $p$-adic analytic groups}
(see \cite[Corollary 8.34]{Dixonetal1991}).
In this section, we compute the mod-$p$ cohomology ring structure of
$\GL_2(\BZ_p)$ for any odd prime $p$, and the outcome is stated
in Theorem \ref{T:introduction_cohoGL2p>=3}. 

Following \cite[Section 5.1]{Dixonetal1991}, if $G=\GL_2(\BZ_p)$, then
$G=\varprojlim_{i\geq1}G/K_i$, where the subgroups  
$K_i=\{x\in G\mid x\equiv I_2\pmod{p^i}\}$ are the
{\em congruence subgroups,}\ 
for $i\geq1$. We know that $K_1$ is a uniform pro-$p$ group of rank $4$ (see \cite[Section 4.1 and Theorem 5.2]{Dixonetal1991}). Let us set the Sylow pro-$p$ subgroup of $G$,
\[
S=\{g\in \GL_2(\BZ_p) |\;g\equiv\big(\begin{smallmatrix}1&*\\0&1\end{smallmatrix}\big)\pmod p \}.
\]
In particular $K_1$ is a strongly $\CF_S(G)$-closed subgroup of $S$. 
Moreover we have a group extension 
\begin{equation}\label{eq:extesionK1Zp}
\xymatrix{1\ar[r]&K_1\ar[r]&S\ar[r]& \BZ/p\ar[r]&1},
\end{equation} 
and the spectral sequence of Theorem \ref{T:SpectralSequence} takes the form
\begin{equation}\label{equ:ssforgl2andK1}
E_2^{n,m}= H_c^n(\BZ/p;H_c^m(K_1;\BF_p))^{\CF_S(G)}\Rightarrow H_c^{n+m}(\GL_2(\BZ_p);\BF_p).
\end{equation}
In Equation~\eqref{eq:extesionK1Zp}, the group $\BZ_p$ is the subgroup
of $G$ generated by
$h=\big(\begin{smallmatrix}1&1\\0&1\end{smallmatrix}\big)$. For $p=3$, $G$ contains $3$-torsion and the extension
\eqref{eq:extesionK1Zp} splits,
while $G$ has no $p$-torsion for $p>3$, and hence the extension does not split.

\subsection{Spectral sequence for $S$}
\label{subsec:ss_S}

We start by describing the second page of the 
spectral sequence \eqref{equ:ssforgl2andK1}. 

We write 
\[
H^*(\BZ/p)=\Lambda(u)\otimes \BF_p[v]\; \text{and}\; H_c^*(K_1)\cong \Lambda(y_{11},y_{12},y_{21},y_{22}),
\]
where the generators have degrees $|u|=1$, $|v|=2$ and
$|y_{ij}|=1$. Recall that the cohomology of $K_1$ is known because it
is a uniform pro-$p$ group (see \cite[Theorem 11.6.1]{Wilson1998} and
the subsequent discussion). The generators $y_{ij}$ are the
continuous homomorphisms $y_{ij}\in \Hom_c(K_1,\BF_p)$ mapping an
element
$g=\big(\begin{smallmatrix}1+pa & pb\\ pc & 1+pd\end{smallmatrix}\big)\in K_1$ to
\[
y_{11}(g)=\overline a\text{, }y_{12}(g)=\overline b\text{, }y_{21}(g)=\overline c\text{, }y_{22}(g)=\overline d,
\]
where $\overline{\cdot}$ 
denotes the projection map $\BZ_p\to \BZ/p$.
We calculate the action of $h^{-1}$ on $H_c^1(K_1)$, and we
obtain 
\begin{equation}\label{equ:GL2_F_p_action_H_K_1}
y_{11}\mapsto y_{11}-y_{21}\ ,\;y_{12}\mapsto
y_{11}+y_{12}-y_{21}-y_{22}\ ,\;
y_{21}\mapsto y_{21}\;\hbox{and}\;y_{22}\mapsto y_{21}+y_{22}.
\end{equation}
Using the exterior algebra structure of $H_c^*(K_1)$, the above action
can be extended to all of $H_c^*(K_1)$ as follows:
\begin{equation}\label{eq:indecomposable_H_K1}
H_c^m(K_1)\cong
\begin{cases}
J^1 &\text{for $m=0,4$}\\
J^1\oplus J^3 &\text{for $m=1,3$}\\
J^3\oplus J^3 &\text{for $m=2$,}
\end{cases}
\end{equation}
where $J^n$ denotes the $\BZ/p$-module of dimension $n$ associated to
the Jordan block of size $n\times n$. We
use the periodic $\BF_p[\BZ/p]$-free resolution in
\cite[\S I.6]{Brown1982} to compute the cohomology groups
$E_2^{n,m}=H_c^n(\BZ/p; H_c^m(K_1))$. Set $y_1=y_{11}+y_{22}$,
$y_2=y_{21}$, $y_3=y_{11}y_{21}$ and
$y_4=y_{11}y_{12}y_{21}-y_{12}y_{21}y_{22}$. Then we have, 
\[
H^n_c(\BZ/p;J^1)=\BF_p\{j\}
\quad\text{with $j=$}
\begin{cases}
1&m=0,\\
y_1&m=1,\\
y_4&m=3,\\
y_1y_4&m=4,
\end{cases}
\]
and
\[
H_c^n(\BZ/p; J^3)=
\begin{cases}
\BF_3\{j\} &n=0,\;p=3,\\
0 & n>0,\;p=3,\\
\BF_p\{j\} &p>3,\; n \; \text{even},\\
\BF_p\{\overline{j}\} & p>3,\; n \;\text{odd},
\end{cases}
\quad\text{with $j=$}
\begin{cases}
y_2&m=1,\\
y_3&m=2,\\
y_2y_1+y_3&m=2,\\
y_1y_3&m=3.
\end{cases}
\]
and with
$$
\begin{array}{rclrcl}
\overline{y_2}&=&y_{12}-y_{11}\;,\quad&\overline{y_3}&=&y_{12}y_{22}-y_{12}y_{21}\;,\quad\\
\overline{y_2y_1+y_3}&=&y_{11}y_{12}\;,\quad&\overline{y_1y_3}&=&y_{11}y_{12}y_{21}-y_{11}y_{12}y_{22}.
\end{array}
$$
To denote a class in $E_2^{n,m}$, we use the representative in $H^n_c(\BZ/p;H_c^m(K_1))$ defined above multiplied with $uv^\frac{n-1}{2}$ or $v^\frac{n}{2}$ according to the parity of $n$.

The cup products in $E_2^{*,*}$ may be computed using the diagonal
approximation (see \cite[Exercise, p. 108]{Brown1982}): if $\alpha$
and $\alpha'$ are representatives in $H_c^m(K_1)$ and $H_c^{m'}(K_1)$
of classes in $E_2^{n,m}$ and $E_2^{n',m'}$ respectively, then their
product is represented by 
\[
\begin{cases}
\alpha\alpha'&\quad\text{if $nn'$ is even, and}\\
\sum_{0\leq i<j<p} h^i(\alpha)h^j(\alpha')&\quad\text{if $nn'$ is odd.}
\end{cases}
\]

For $p=3$, we obtain that $E_2^{*,*}$ is generated by the classes
$u,v,y_1,y_2,y_3,y_4$ and it has the following presentation as a
bigraded $\BF_3$-algebra,
{\small
\begin{equation}\label{equ:E2pequal3}
\BF_3[v]\otimes \Lambda[u,y_1,y_2,y_3,y_4]/\{y_2u=y_2v=y_3u=y_3v=y_2y_3=y_2y_4=y_3y_4=0\}.
\end{equation}
}For $p>3$, we obtain that $E_2^{*,*}$ is generated by the classes
$u,v,y_1,y_2,y_3,y_4,u\overline{y_2},u\overline{y_3}$, because of the
relations
\begin{align*}
u\overline{y_1y_3}=\frac{1}{2}uy_4-y_1 u\overline{y_3}\quad\text{and}\quad u\overline{y_1y_2}=-y_1u\overline{y_2}.
\end{align*}
In both cases, multiplication by $v$ is an isomorphism from
$E_2^{n,m}$ to $E_2^{n+2,m}$ for $n\geq 1$ and $m\geq 0$.
Figure \ref{F:E2cornerLHSofSforp=3p>3} describes the bottom left
corner of the spectral sequences in \eqref{equ:ssforgl2andK1}, for $p=3$ and $p>3$ separately. In this and the subsequent tables, we
leave an entry blank if it is zero.

\begin{figure}[h]
\begin{subfigure}[t]{0.45\textwidth}
\[{\small
\xymatrix@=0pt{
&&&\\
4&y_1y_4 &uy_1y_4 &vy_1y_4 \\
3&y_4, y_1y_3 &uy_4  &vy_4 \\
2&y_3,y_1y_2  && \\
1&y_1, y_2&uy_1&vy_1\\
0&1&u&v \\
&0 & 1 & 2 \\
\ar@{-}"1,2"+<-17pt,-2pt>;"7,2"+<-17pt,-5pt>;
\ar@{-}"6,1"+<-5pt,-5pt>;"6,4"+<25pt,-5pt>;
}
}
\]
\end{subfigure}
\begin{subfigure}[t]{0.45\textwidth}
\[{\small
\xymatrix@=0pt{
&&&\\
4&y_1y_4 &uy_1y_4 &vy_1y_4 \\ 
3&y_4, y_1y_3 &uy_4, u\overline{y_1y_3} &vy_1y_3,vy_4 \\
2&y_3,y_1y_2  &u\overline{y}_3, u\overline{y_1y_2}&vy_3, vy_1y_2 \\
1&y_1, y_2&uy_1, u\overline{y}_2&vy_1,vy_2 \\
0&1&u&v \\
&0 & 1 & 2 \\
\ar@{-}"1,2"+<-17pt,-2pt>;"7,2"+<-17pt,-5pt>;
\ar@{-}"6,1"+<-5pt,-5pt>;"6,4"+<25pt,-5pt>;
}
}
\]
\end{subfigure}
\caption{The corner of $E_2$ for $p=3$ (left) and $p>3$ (right).}\label{F:E2cornerLHSofSforp=3p>3}
\end{figure}

\subsection{Continuous mod-$3$ cohomology ring of $\GL_2(\BZ_3)$.}

We set $p=3$ and we compute the ring $H_c^*(\GL_2(\BZ_3);\BF_3)$.

\begin{thm}
\label{thm:ssGp=3}
The spectral sequence \eqref{equ:ssforgl2andK1} for $p=3$ satisfies that $E_2^{\CF_S(G)}$ is the free bigraded algebra with generators
{\small 
 \[
\xymatrix@=0pt{
&&&\\
E_2^{0,m}&y_{1}& &y_{4}\\
m&1 &2 &3 \\
\ar@{-}"1,2"+<-8pt,-6pt>;"3,2"+<-8pt,-6pt>;
\ar@{-}"2,1"+<-10pt,-7pt>;"2,4"+<15pt,-7pt>;
}\quad
\xymatrix@=0pt{
&&&&\\
E_2^{n,0}& & &uv&v^2\\
n&1 &2 &3 &4\\
\ar@{-}"1,2"+<-6pt,-6pt>;"3,2"+<-6pt,-6pt>;
\ar@{-}"2,1"+<-10pt,-7pt>;"2,5"+<10pt,-7pt>;
}
\]
}and, in addition, $d_i(y_1)=d_i(y_4)=d_i(uv)=d_i(v^2)=0$ for all $i\geq 2$. Moreover, $H_c^*(\GL_2(\BZ_3))\cong \BF_3[X]\otimes \Lambda(Z_1, Z_2,Z_3)$, where $Z_1, Z_2, Z_3$ and $X$ are the liftings of $y_1, uv, y_4$ and $v^2,$ respectively; of degrees $|Z_1|=1$, $|Z_2|=|Z_3|=3$ and $|X|=4$.
\end{thm}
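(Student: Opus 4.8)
The plan is to establish the theorem in three stages: first compute the $\CF_S(G)$-stable part $E_2^{\CF}$ of the second page described in Subsection~\ref{subsec:ss_S}, then prove that the resulting spectral sequence collapses, and finally resolve the multiplicative extensions to recover the ring $H^*_c(\GL_2(\BZ_3))$.

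For the first stage, I would use that $S$ is open in $G$, so $\CF_S(G)$ is finitely generated (Lemma~\ref{T:S open implies F_S(G) f.g.}) and Lemma~\ref{L:stable_elements_orbit_functor_finitely_generated_F} reduces stability to finitely many morphisms. Since $S$ reduces mod $3$ to the upper unipotent subgroup of $\GL_2(\BF_3)$, whose normalizer is the Borel, the fusion not already in $\Inn(S)$ is generated by conjugation by the diagonal torus; the outer part is $\Out_{\CF}(S)\cong \BF_3^\times$, acting through the ratio of the diagonal entries. I would compute the action of a generator of this $\BF_3^\times$ (the element with ratio $-1$) on the classes of Subsection~\ref{subsec:ss_S}: on $H^*(\BZ/3)$ it sends $u\mapsto -u$, $v\mapsto -v$, hence fixes $uv$ and $v^2$ but not $u,v$; and on $H^*_c(K_1)$ it fixes $y_{11},y_{22}$ while negating $y_{12},y_{21}$, hence fixes $y_1=y_{11}+y_{22}$ and $y_4$ while negating $y_2,y_3$. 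Equivalently, identifying $H^1_c(K_1)\cong \mathfrak{gl}_2(\BF_3)^*$ with its adjoint action, the stable part of the $n=0$ column is $(\Lambda^*\mathfrak{gl}_2(\BF_3)^*)^{\GL_2(\BF_3)}$, spanned by $1,y_1,y_4,y_1y_4$. Combining the surviving column and row with the $v$-periodicity recorded in Subsection~\ref{subsec:ss_S} gives that $E_2^{\CF}$ is the free bigraded algebra on $y_1,y_4,uv,v^2$, as claimed; I would finish this stage by checking directly that these four classes are genuinely stable, not merely $\Out_\CF(S)$-invariant.

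For the collapse, the generators in the bottom row and first column are handled by degree and by the splitting of~\eqref{eq:extesionK1Zp} for $p=3$. Indeed $d_i(uv)$ and $d_i(v^2)$ land in negative fibre degree, and $d_i(y_1)$ lands in $E_i^{i,2-i}$, which is stably zero for every $i\ge 2$, so these vanish; moreover the section $\BZ/3\hookrightarrow S$ induces a map of spectral sequences that is an isomorphism on the bottom row and zero above it, which forces every differential into the bottom row to vanish and in particular kills $d_4(y_4)$. The one remaining and genuinely delicate differential is $d_3(y_4)\in E_3^{3,1}=\la uv\,y_1\ra$; I expect this to be the main obstacle, since no formal Leibniz or edge argument rules it out ($y_1^2=0$ makes $d_3(y_1y_4)$ vanish automatically, and the target is a nilpotent class undetected on elementary abelian subgroups). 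To dispose of it I would exploit the remark following Theorem~\ref{T:SpectralSequence}: as $K_1\unlhd G$, the stable spectral sequence coincides with the Lyndon--Hochschild--Serre sequence of $1\to K_1\to G\to \GL_2(\BF_3)\to 1$, whose $E_2$-term is $H^n(\GL_2(\BF_3);\Lambda^m\mathfrak{gl}_2(\BF_3)^*)$; one then shows $\dim_{\BF_3}H^3_c(\GL_2(\BZ_3))=2$ either by analysing this finite-group spectral sequence or by comparison with Henn's computation \cite{Henn1998}, which forces $d_3(y_4)=0$ because a nonzero $d_3(y_4)$ would drop the total degree-$3$ dimension to $1$.

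Once all differentials vanish we have $E_\infty^{\CF}=E_2^{\CF}$, and the final stage is routine. Choosing liftings $Z_1,Z_2,Z_3,X$ of $y_1,uv,y_4,v^2$ of degrees $1,3,3,4$, the generators $Z_1,Z_2,Z_3$ are of odd degree, hence square to zero since $3$ is odd, while $X$ is even; there are no classes in total degree $2$, so no relation of the form $Z_iZ_j=X\cdot(\cdots)$ can occur, and more generally the associated graded leaves no room for hidden extensions. The induced filtered algebra map $\BF_3[X]\otimes\Lambda(Z_1,Z_2,Z_3)\to H^*_c(\GL_2(\BZ_3))$ is therefore an isomorphism on associated graded, hence an isomorphism, which is the assertion of the theorem.
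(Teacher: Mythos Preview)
Your three-stage plan matches the paper's, and your computation of $E_2^{\CF}$ is essentially the same: the paper takes stability under $c_{g_1},c_{g_2}\colon K_1\to K_1$ for the zeroth column and under the diagonal elements $g_t,g_z$ for the rest, which is exactly your $(\Lambda^*\mathfrak{gl}_2(\BF_3)^*)^{\GL_2(\BF_3)}$ description for $n=0$ together with the torus action. One caveat: your sentence ``the fusion not already in $\Inn(S)$ is generated by conjugation by the diagonal torus'' is literally false, since $g_2=\big(\begin{smallmatrix}1&0\\1&1\end{smallmatrix}\big)\notin S$ gives genuine fusion on $K_1$; but operationally this does not bite, because for $p=3$ the relations $y_2u=y_2v=y_3u=y_3v=0$ leave nothing in columns $n\geq1$ beyond products of $y_1,y_4,u,v$, and your $\GL_2(\BF_3)$-invariants already account for the missing fusion on column $0$. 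Your extension argument at the end is also the paper's.

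The substantive divergence is in how you kill $d_3(y_4)$. You propose either to quote $\dim_{\BF_3}H^3_c(G)=2$ from Henn~\cite{Henn1998} or to analyse the LHS sequence of $K_1\unlhd G$ directly. The first is logically valid but circular in spirit, since the theorem is precisely reproving Henn's computation by other means; the second you leave unspecified. The paper instead uses a short comparison trick that handles all $d_i(y_4)$ at once: it passes to the finite quotient $Q=S/K_2$, identifies the resulting extension $K_1/K_2\to Q\to\BZ/3$ as $Q\cong(\BZ/3\wr\BZ/3)\times\BZ/3$, whose LHS spectral sequence collapses at $E_2$ by \cite[IV.1, Theorem~1.7]{AdemMilgram2004}, and checks that $y_4$ lies in the image of $H^3(K_1/K_2)\to H^3_c(K_1)$ (it is a product of degree-$1$ classes). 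Naturality then gives $d_i(y_4)=0$ for every $i\geq2$ with no appeal to the final answer, replacing both your section argument for $d_4$ and your dimension count for $d_3$, and keeping the proof self-contained.
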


\begin{proof}

By Lemma \ref{T:S open implies F_S(G) f.g.}, $\CF_S(G)$ is generated
by the finite set
\[
\mathcal{M}=\{c_{g_1}, c_{g_2} : K_1\to K_1 \}\cup \{c_{g_t}, c_{g_z}: S\to S\},
\]
where, the conjugation morphisms are given by the following elements,
\[
g_1=\big(\begin{smallmatrix}1&1\\0&1\end{smallmatrix}\big)\ ,\quad
g_2=\big(\begin{smallmatrix}1&0\\1&1\end{smallmatrix}\big)\ ,\quad
g_t=\big(\begin{smallmatrix}t&0\\0&1\end{smallmatrix}\big)\ ,\quad g_z=\big(\begin{smallmatrix}1&0\\0&z\end{smallmatrix}\big),
\]
with $t,z\in\{1,2\}$. A straightforward computation shows that the elements of $E_2^{0,*}$ that restrict to $H^*_c(K_1)^{\langle g_1,g_2\rangle}$ are the following,
{\small 
\begin{equation*}\label{eq:E2stableonK1}
\xymatrix@=0pt{
&&&&\\
E_2^{0,m}&y_{1}&&y_{4}&y_1y_4\\
m&1 &2 &3&4 \\
\ar@{-}"1,2"+<-8pt,-6pt>;"3,2"+<-8pt,-6pt>;
\ar@{-}"2,1"+<-10pt,-7pt>;"2,5"+<12pt,-7pt>;
}
\end{equation*}
}

It remains to find the stable elements in $E_2^{*,*}$ under the
action of $g_t$, 
\[
u\mapsto t^{-1}u,\; v\mapsto t^{-1}v,\;
y_{1}\mapsto y_{1} \text{, }\;y_{2}\mapsto ty_{2},\;
y_{3}\mapsto ty_{3}\text{, }\;y_{4}\mapsto y_{4},\;
\] and the action of $g_z$,
\begin{align*}
u\mapsto zu, v\mapsto zv,
y_{1}\mapsto y_{1} \text{, }\;y_{2}\mapsto z^{-1}y_{2},\;
y_{3}\mapsto z^{-1}y_{3}\text{, }\;y_{4}\mapsto y_{4}.
\end{align*}

A short computation shows that $E_2^{\CF_S(G)}$ is generated by the elements $y_1, uv, y_4, v^2$, which are circled in the next figure.
\begin{equation*}\label{eq:E2GofG}
\xymatrix@=0pt{
&&&&&&&&&&&\\
4&y_1y_4 &  &  & y_1y_4uv &y_1y_4v^2 & & &y_1y_4uv^3&y_1y_4v^4& \\
3& \xybox{(0,0.5)*+[Fo]{y_4}} &  &  & y_4uv &y_4v^2& &&y_4uv^3 &y_4v^4\\
2& &  &  & && & & \\
1& \xybox{(0,0.5)*+[Fo]{y_1}}&  &  & y_1uv & y_1v^2&&&y_1uv^3 & y_1v^4\\
0&1 & &  & \xybox{(0,0.2)*+[Fo]{uv}} &  \xybox{(0,0.6)*+[Fo]{v^2}}& & &uv^3&v^4\\
&0&1&2&3&4&5&6&7&8\\
\ar@{-}"1,2"+<-10pt,-6pt>;"7,2"+<-10pt,-6pt>;
\ar@{-}"6,1"+<-15pt,-7pt>;"6,10"+<10pt,-7pt>;
}
\end{equation*}

Moreover, by Equation \eqref{equ:E2pequal3},  $E_2^{\CF_S(G)}$ is the free bigraded algebra on these generators.
Clearly, $d_i(y_1)=d_i(uv)=d_i(v^2)=0$ for all $i\geq 2$. To deduce
the differentials $d_i(y_4)$, $i\geq 2$, consider the quotient group
$Q=S/K_2$, where $K_2=\{x\in\GL_2(\BZ_3)\mid x\equiv I_2\pmod{3^2}\}$
denotes the second congruence subgroup. 

This yields a commutative diagram of split extensions,
\[
\xymatrix@R=13pt{
1\ar[r]&K_1 \ar[d]\ar[r] &S\ar[d]\ar[r] &\BZ/3 \ar@{=}[d]\ar[r]&1\\
1\ar[r]&K \ar[r] &Q\ar[r] &\BZ/3\ar[r]&1.}
\]
We have $Q=\langle k_{11},k_{12},k_{21},k_{22},h\rangle$ with $K\cong (\BZ/3)^4$ generated by the elements 
\[
k_{11}=\Big(\begin{smallmatrix}1+3&0\\0&1\end{smallmatrix}\Big),
\quad k_{12}=\Big(\begin{smallmatrix}1&3\\0&1\end{smallmatrix}\Big),\quad k_{21}=\Big(\begin{smallmatrix}1&0\\3&1\end{smallmatrix}\Big),\quad\text{and}\quad k_{22}=\Big(\begin{smallmatrix}1&0\\0&1+3\end{smallmatrix}\Big).
\]
The action of $h$ permutes the first three elements of the following
basis of $K$ cyclically, and $h$ fixes the last one:
\[
\{k_{11}-k_{12}+k_{21}-k_{22},\;k_{21},\;-k_{11}-k_{12}+k_{21}+k_{22},\;k_{11}+k_{22}\}.
\]
So $Q=(\BZ/3 \wr \BZ/3)\times \BZ/3$ and the LHS spectral sequence
associated to $Q$ collapses at the second page (see \cite[IV.1, Theorem
  1.7]{AdemMilgram2004}). Moreover, it is straightforward to see that
$y_4$ is in the image of the map $H^*(K)\to H^*_c(K_1)$, and so $d_i(y_4)=0$
for $i\geq 2$. Finally, the only possible lift of a free bigraded
algebra is the free graded algebra on the same total degrees
\cite[\S 1.5, Example 1.K]{McCleary2001}. The description of
$H^*_c(\GL_2(\BZ_3);\BF_3)$ follows. 
\end{proof}

\subsection{Continuous mod-$p$ cohomology ring of $\GL_2(\BZ_p)$ for $p>3$.}

We now compute the ring structure of $H_c^*(\GL_2(\BZ_p);\BF_p)$ for $p>3$. 

\begin{thm}
\label{thm:ssGp>3}
The spectral sequence \eqref{equ:ssforgl2andK1} for $p>3$ satisfies that
$E_{\infty}^{\CF_S(G)}$ is a free bigraded algebra with generators $y_1, vy_2$ with bigraded degrees $|y_1|=(0,1)$ and $|vy_2|=(2,1)$.
In addition, $H_c^*(\GL_2(\BZ_p);\BF_p)\cong \Lambda(Z_1,Z_2),$ where $Z_1$ and $Z_2$ are the liftings of $y_1$ and $vy_2$, respectively; of total degrees $|Z_1|=1$, $|Z_2|=3$.
\end{thm}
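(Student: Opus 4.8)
The plan is to follow the strategy of the proof of Theorem~\ref{thm:ssGp=3}: first pin down the stable second page $E_2^{\CF_S(G)}$ of the spectral sequence~\eqref{equ:ssforgl2andK1}, then determine its differentials, and finally resolve the (trivial) extension problem. By Lemma~\ref{T:S open implies F_S(G) f.g.} the pro-fusion system $\CF_S(G)$ is generated by the finite set $\CM=\{c_{g_1},c_{g_2}\colon K_1\to K_1\}\cup\{c_{g_t},c_{g_z}\colon S\to S\}$ of Theorem~\ref{thm:ssGp=3}, now with $t,z$ ranging over a set of generators of $(\BZ/p)^\times$, so that by Lemma~\ref{L:stable_elements_orbit_functor_finitely_generated_F} stability need only be tested against these morphisms.

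First I would compute $E_2^{\CF_S(G)}$ from the description of $E_2$ in Subsection~\ref{subsec:ss_S} and Figure~\ref{F:E2cornerLHSofSforp=3p>3}. The decisive new feature for $p>3$ is visible in~\eqref{eq:indecomposable_H_K1}: since $J^3$ is not free over $\BF_p[\BZ/p]$ when $3<p$, one has $H_c^n(\BZ/p;J^3)\cong\BF_p$ in all degrees $n$, whereas for $p=3$ it is concentrated in degree $0$. Recording the torus weights of the generators under $g_t$ and $g_z$ --- where $u$, $v$ and the ``top'' $J^3$-classes carry the weight $w=t^{-1}z$, the invariant (bottom) $J^3$-classes $y_2,y_3$ carry weight $w^{-1}$, and $y_1,y_4$ are invariant --- one finds that weight cancellation confines the torus-stable classes to the columns $n=0$ and $n=2$. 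Concretely, $E_2^{\CF_S(G)}$ is spanned by $1,y_1,y_4,y_1y_4$ in the column $n=0$ and by $vy_2,vy_3,vy_1y_2,vy_1y_3$ in the column $n=2$; note that $v^2$ and $uv$, which survived for $p=3$, now carry weight $w^2$ and are no longer stable. One still has to confirm $g_1,g_2$-stability in the column $n=0$ exactly as for $p=3$, and to note that $y_1$ and $vy_2$ are permanent cycles, which is immediate since every differential leaving them lands in a torus-unstable (hence zero) group.

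The heart of the argument is to show that the four ``extra'' classes $y_4,vy_3,y_1y_4,vy_1y_3$ do not survive. The key input is that for $p>3$ the Sylow pro-$p$ subgroup $S$ is torsion-free, so that, $K_1$ being uniform of rank $4$ with $[S:K_1]=p$, one has $\operatorname{cd}_p(S)=4$; consequently $H_c^n(\GL_2(\BZ_p);\BF_p)\cong H_c^n(S;\BF_p)^{\CF_S(G)}$ (Corollary~\ref{cor:cohomology_of_profinite_group_with_open_Sylow}), being a subgroup of $H_c^n(S;\BF_p)$, vanishes for $n>4$. The only stable class of total degree $5$ is $vy_1y_3$, at $(2,3)$; since no nonzero differential leaves $(2,3)$ and the only one reaching it is $d_2\colon E_2^{0,4}\to E_2^{2,3}=\langle vy_1y_3\rangle$, the class $vy_1y_3$ can die only if $d_2(y_1y_4)\neq 0$, in which case $d_2(y_1y_4)$ is a nonzero multiple of $vy_1y_3$. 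The Leibniz rule together with $d_2(y_1)=0$ then gives $d_2(y_1y_4)=\pm\,y_1\,d_2(y_4)$, whence $d_2(y_4)$ is a nonzero multiple of $vy_3$. Thus $d_2$ pairs off $y_4\leftrightarrow vy_3$ and $y_1y_4\leftrightarrow vy_1y_3$, there is no room for higher differentials, and $E_\infty^{\CF_S(G)}=\Lambda(y_1,vy_2)$ as claimed. This is precisely where the behaviour diverges from $p=3$: there $J^3$ is free, the target $vy_3$ of $d_2(y_4)$ is absent, so $y_4$ survives and contributes the extra generators of the $p=3$ answer.

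Finally the extension problem is vacuous: lifting $y_1$ and $vy_2$ to classes $Z_1,Z_2\in H_c^*(\GL_2(\BZ_p);\BF_p)$ of degrees $1$ and $3$, one has $Z_1^2=0$ and $Z_1Z_2\neq0$ for degree reasons, and $Z_2^2=0$ since it would lie in degree $6>4$, so $H_c^*(\GL_2(\BZ_p);\BF_p)\cong\Lambda(Z_1,Z_2)$. I expect the main obstacle to be the weight bookkeeping of the second step --- in particular getting the action of the torus on the twisted classes $\overline{y_2},\overline{y_3}$ of $H_c^*(\BZ/p;J^3)$ correct, since $g_t,g_z$ do not commute with the generator $h$ of $\BZ/p$ and so act on these cohomology groups by more than a naive scalar --- whereas the differential in the third step is forced cleanly by the cohomological dimension bound.
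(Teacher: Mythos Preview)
Your claim that torus--stability confines $E_2^{\CF_S(G)}$ to the columns $n=0$ and $n=2$ is false, and this is the genuine gap. The torus $(\BZ/p)^\times\times(\BZ/p)^\times$ has exponent $p-1$, so a class of weight $w^k$ is invariant precisely when $(p-1)\mid k$, not only when $k=0$. In particular $v^{p-1}$ (bidegree $(2p-2,0)$) and $uv^{p-2}$ (bidegree $(2p-3,0)$) are stable, and there are four further stable generators in column $2p-5$ built from the $\overline{y_j}$; the paper lists ten generators of $E_2^{\CF_S(G)}$ and a periodicity $(E_2^{n,m})^{\CF}\cong (E_2^{n+2(p-1),m})^{\CF}$ for $n\geq1$. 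Consequently your sentence ``there is no room for higher differentials'' is unjustified as written. A smaller imprecision: your Leibniz argument only forces the $vy_3$--coefficient of $d_2(y_4)$ to be nonzero; a $vy_1y_2$--component is not excluded, though this is harmless since $vy_1y_2=y_1\cdot vy_2$ is a product of permanent cycles and therefore represents the surviving class in $E_\infty^{2,2}$.

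That said, the gap is easily closed within your own framework and your route is arguably cleaner than the paper's. Every stable class you missed sits in total degree $\geq 2p-4\geq 6$, while your $\mathrm{cd}_p(S)=4$ argument (valid because $S$ is a torsion--free pro--$p$ group containing the uniform group $K_1$ of rank $4$ as an open subgroup) already forces $E_\infty^{\CF}$ to vanish in total degrees $>4$; since differentials raise total degree by one, none of those classes interact with the range $\leq4$. So it suffices to replace your global claim by the correct local one: \emph{in total degree $\leq5$}, $E_2^{\CF}$ is concentrated in columns $0$ and $2$ and is exactly as you describe. The paper, by contrast, computes $E_2^{\CF}$ in full and then invokes Lazard's finite--generation theorem plus Quillen's $F$--isomorphism (using that $G$ has no $p$--torsion) to deduce that $H_c^*(G;\BF_p)$ is a finite $\BF_p$--algebra, and combines this with the $v^{p-1}$--periodicity and Leibniz to force all high--degree differentials to be nonzero. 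Your cohomological--dimension shortcut bypasses both Lazard and Quillen; the paper's approach, on the other hand, identifies the differentials on the high columns explicitly rather than merely asserting their irrelevance.
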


\begin{proof}
We follow the proof of Theorem \ref{thm:ssGp=3} and the notation there. The action of $g_t$ on the remaining generators of $E_2$ for $p>3$ (see Figure \ref{F:E2cornerLHSofSforp=3p>3}) is determined by the following equations,
$$
\begin{array}{rclrcl}
\overline{y_2}&\mapsto&
\frac{t^{-1}-1}{2}y_1+t^{-1}\overline{y_2}\text{, }&
\overline{y_3}&\mapsto& t^{-1}\overline{y_3}\text{, }\\
\overline{y_1y_2}&\mapsto& t^{-1}\overline{y_1y_2} \text{, }& \overline{y_1y_3}&\mapsto&\frac{1-t^{-1}}{2}y_4+t^{-1}\overline{y_1y_3}
\end{array}$$
and the action of $g_z$ by the next set of expressions,
\[
\overline{y_2}\mapsto\frac{z-1}{2}y_1+z\overline{y_2}\text{, } \quad\overline{y_3}\mapsto z\overline{y_3}\text{, }\quad\overline{y_1y_2}\mapsto z\overline{y_1y_2}\text{, }\quad\overline{y_1y_3}\mapsto \frac{1-z}{2}y_4+z\overline{y_1y_3}.
\]
A tedious computation shows that $E_2^{\CF_S(G)}$ is generated by 
$$
y_1,\; y_4,\;  vy_2, \;vy_3,\;
\tfrac{1}{2}uv^{p-3}y_1+uv^{p-3}\overline{y_2},\;
uv^{p-3}\overline{y_1y_2},$$
$$uv^{p-3}\overline{y_3},\;
-\tfrac{1}{2}uv^{p-3}y_4+uv^{p-3}\overline{y_1y_3},\; uv^{p-2},\; v^{p-1}
$$
with bigraded degrees 
{\small
\begin{align*}
|y_1|=(0,1), |y_4|=(3,0), |vy_2|=(2,1), |vy_3|=(2,2),  |\tfrac{1}{2}uv^2y_1+uv^2\overline{y_2}|=(2p-5,1), \\
|uv^2\overline{y_1y_2}|=(2p-5,2), |uv^{p-3}\overline{y_3}|=(2p-5,2),  \\
|-\tfrac{1}{2}uv^{p-3}y_4+uv^{p-3}\overline{y_1y_3}|=(2p-5,3), |uv^{p-2}|=(2p-3,0),|v^{p-1}|=(2p-2,0).
\end{align*}}Multiplication by $v^{p-1}$ gives an isomorphism  
\begin{equation}\label{equ:8periodic}
(E_2^{n,m})^{\CF_S(G)}\cong (E_2^{n+2(p-1),m})^{\CF_S(G)}
\end{equation} for all $n\geq 1$ and $m\geq 0$. The following figure
is the $E_2$ corner for the $p=5$ case and the circled elements are
the generators listed above. The only possibly non-trivial
differentials are $d_2$ and $d_3$ and they are also depicted in the
figure. 

{\small
\begin{equation*}\label{eq:E2FofGp>3}
\xymatrix@=0pt{
&&&&&&&&&&&\\
4&y_1y_4\ar[rrd] &  &  &  & & & &uv^3y_1y_4\ar@<1ex>[rrrdd]&v^4y_1y_4\ar[rrd]& &&\\
3&\xybox{(0,0.5)*+[Fo]{y_4}}\ar[rrd] &  &vy_1y_3  &  &&*[F:<8pt>]{-\frac{1}{2}uv^2y_4+uv^2\overline{y_1y_3}}\ar[rrrdd] &&uv^3y_4\ar[rrrdd] &v^4y_4\ar[rrd]&&v^5y_1y_3\\
2& &  &\xybox{(0,0.2)*+[Fo]{vy_3}}, vy_1y_2 & &&*[F:<9pt>]{uv^2\overline{y_3}, uv^2\overline{y_1y_2}}\ar[rrrdd]\ar@<0.5ex>[rrd] & & &&&v^5y_1y_2, v^5y_3\\
1&\xybox{(0,0.5)*+[Fo]{y_1}}&  &\xybox{(0,0.2)*+[Fo]{vy_2}}  & & &*+[F:<7pt>]{\frac{1}{2}uv^2y_1+uv^2\overline{y_2}}\ar@<0.5ex>[rrd]& &uv^3y_1 & v^4y_1&&v^5y_2\\
0&1 & &  & &  & & &\xybox{(0,0.2)*+[Fo]{uv^3}}&\xybox{(0,0.2)*+[Fo]{v^4}}&&\\
&0&1&2&3&4&5&6&7&8&9&10\\
\ar@{-}"1,2"+<-10pt,-6pt>;"7,2"+<-10pt,-6pt>;
\ar@{-}"6,1"+<-15pt,-7pt>;"6,12"+<10pt,-7pt>;
}
\end{equation*}
}

Since $G$ is a compact $p$-adic analytic group, its cohomology ring is
finitely generated (see \cite{Lazard1965}) and, since $G$ has no
$p$-torsion, we conclude that $H_c^*(G;\BF_p)$ is a finite
$\BF_p$-algebra from Quillen's F-isomorphism Theorem
\cite{Quillen1971}. In particular, the elements in the columns
$2p-5+2(p-1)k$, $2p-3+2(p-1)k$, $2p-2+2(p-1)k$ and $2p+2(p-1)k$ must
vanish for all $k\geq k_0$ for some $k_0\geq 0$. The periodicity
in Equation~\eqref{equ:8periodic} and the Leibniz rule force $k_0=0$,
and it follows that all the
differentials between non-trivial elements of these columns are nonzero. In turn, this implies that $d_2(y_4)=\alpha vy_3
+\beta vy_1y_2$ for some $\alpha,\beta\in\BF_p$ with $\alpha\neq 0$.
Consequently the only
non-trivial elements in $E_{\infty}^{\CF_S(G)}=E_4^{\CF_S(G)}$ are
$y_1, vy_2$ and $\overline{\alpha' vy_3+ \beta' vy_1y_2}$ with
$\alpha'$ and $\beta'$ in $\BF_p$ such that the determinant
$\Big|\begin{matrix} \alpha&
\alpha'\\\beta & \beta'\end{matrix}\Big|\neq 0$. Also, the product
$vy_1y_2$ is equal to $\overline{\alpha' vy_3+ \beta' vy_1y_2}$ because
$\Big|\begin{matrix} \alpha& 0\\\beta & 1\end{matrix}\Big|=\alpha\neq
0$. Thus, $E_{\infty}^{\CF_S(G)}$ is the free bigraded $\BF_p$-algebra
with generators $y_1$ and $vy_2$ of bigraded degrees $|y_1|=(0,1)$ and
$|vy_2|=(2,1)$. Again the only lift is the free graded $\BF_p$-algebra with
generators $Z_1$ and $Z_2$ of degrees $|Z_1|=1$ and $|Z_2|=3$. 
\end{proof}

\bibliographystyle{amsplain}

\end{document}